\documentclass[final,4p,4pt, times]{article}
\usepackage{authblk}
\usepackage{amsmath, amsthm, mathrsfs, dsfont}
\usepackage{mathrsfs}
\usepackage{amssymb,latexsym}
\usepackage[german,english]{babel}
\usepackage{url}
\usepackage{graphicx}
\usepackage{gastex}
\usepackage{longtable}
\usepackage{lscape}
\usepackage{tabularx}
\usepackage{multicol}
\usepackage{verbatim}
\usepackage{multirow}
\usepackage{graphicx}
\usepackage{blkarray}   
\usepackage{epsfig,graphics,graphicx}
\usepackage{geometry}
\usepackage{float}
\usepackage{setspace}
\usepackage[hidelinks]{hyperref}
\usepackage{orcidlink}
\usepackage{tikz}

\hbadness=\maxdimen

\newtheorem{rem}{{\bf Remark}}
\newtheorem{deff}{Definition}[section]
\newtheorem{lem}{{\bf Lemma}}[section]
\newtheorem{thm}{{\bf Theorem}}[section]
\newtheorem{prop}{{\bf Proposition}}[section]
\newtheorem{cor}{{\bf Corollary}}[section]

\usepackage{chngcntr}
\counterwithout{equation}{section}
\newcounter{case}
\renewcommand{\thecase}{\alph{case}}

\begin{document}
\title{Characterisations of finite groups with exponent $q$ via their power graphs} 
\author{Aditya Singh$^1$\orcidlink{0009-0000-8323-4563} \  Anmol chugh$^1$\orcidlink{0009-0001-4233-1065} \ Yogendra Singh$^2$\orcidlink{0000-0001-6305-7168} \
Anand Kumar Tiwari$^1$\orcidlink{0000-0002-7038-1801}}

\date{
$^{1}$\small Department of Applied Sciences, Indian Institute of Information Technology, Allahabad 211015, India. \\ $^{2}$\small Department of Mathematics, Faculty of Sciences, Adani University, Ahmedabad 382421, India}

\maketitle
	
\hrule
\begin{abstract} The power graph $P(G)$ of a finite group $G$ is the graph with vertex set $G$ and edge set $E(P(G))=\{uv:\ u,v \in G,\ u \neq v,\ u \in \langle v \rangle \ \text{or}\ v \in \langle u \rangle\},$ where $\langle x\rangle$ denotes the cyclic subgroup generated by $x$. In this paper, we characterise all the finite groups with exponent $q$ whose power graphs are friendship graphs, firefly-type graphs, or torch graphs. We prove that the power graph of a finite group $G$ with exponent $q$ is a friendship graph if and only if $q=3$. In particular, in the abelian case, this is equivalent to $G\cong\mathbb{Z}_3^{n}$. We further show that, among all the symmetric and alternating groups, only $S_3$ and $A_4$ have firefly-type power graphs, whereas no finite group has a power graph isomorphic to a torch graph. Finally, we determine the generalised distance spectra $D_{\alpha}$-spectra of these graph classes.
 
\end{abstract}

\noindent{\bf Keywords:} Distance matrix, distance signless Laplacian matrix, $D_\alpha$ matrix, graph spectra. \\
\textbf{MSC(2020):}  15A18, 05C50, 05C12.

\hrule

\section{Introduction}
Graphs arising from algebraic structures such as groups, rings, and vector spaces have attracted considerable attention in recent years, as they provide a natural connection between algebra and graph theory. Among these, the power graph provides an important graphical representation of semigroups and groups. The concept of the directed power graph of a semigroup was introduced by Kelarev and Quinn \cite{kq(2002)}. For a semigroup $S$, the directed power graph, denoted by $\overrightarrow{P}(S)$, has vertex set $S$, and there is a directed edge from $u$ to $v$ whenever $u\neq v$ and $v=u^{m}$ for some positive integer $m$. Subsequently, Chakrabarty et al. \cite{cgs(2009)} introduced the undirected power graph of a group. For a finite group $G$, the power graph $P(G)$ is the simple graph with vertex set $G$, in which two distinct vertices are adjacent precisely when one belongs to the cyclic subgroup generated by the other.

The foundational work of Chakrabarty et al. \cite{cgs(2009)} established that $P(G)$ is complete if and only if $G$ is a cyclic group of order $1$ or $p^{m}$, where $p$ is a prime and $m\in\mathbb{N} \cup \{0\}$. This result initiated a systematic study of the structural properties of power graphs. Later, Doostabadi et al. \cite{def(2014)} classified all finite groups whose power graphs are claw-free, $K_{1,4}$-free, or $C_{4}$-free. The study of structural properties of power graphs has continued in several directions. Ahmad et al. \cite{aa(2020)} characterised finite groups whose power graphs are unicyclic, bicyclic, or cactus graphs, and also determined the spectra of these graph classes. Cameron et al. \cite{mcm(2021)} proved that, for a finite group $G$, the following are equivalent: $P(G)$ is a threshold graph; $P(G)$ is a split graph; and $G$ belongs to one of the following classes: cyclic groups of prime power order, elementary abelian groups, dihedral $2$-groups, cyclic groups of order $2p$, or dihedral groups of order $2p^{n}$ or $4p$, where $p$ is an odd prime. More recently, Mandal et al. \cite{mm(2023)} classified all the finite groups whose power graphs are $\{P_{5},\overline{P_{5}}\}$-free or $\{P_{2} \cup P_{3}, \overline{P_{2} \cup P_{3}}\}$-free, and further characterised those groups whose power graphs are chain graphs or diamond-free graphs. For recent developments on the power graph over finite groups, we refer the reader to  \cite{kp(2026),stpa(2026), ss(2026), ssk(2026)}. Motivated by these developments, we characterise all the finite groups whose power graphs are friendship graphs, firefly-type graphs, or torch graphs.

We proceed as follows. In Section \ref{s2}, we present the necessary definitions and preliminary results. Section \ref{s3} we classify all finite groups whose power graphs are friendship graphs, firefly-type graphs, or torch graphs. In Section \ref{s4}, we determine the generalised distance spectra of these graph families and derive the corresponding distance and distance signless Laplacian spectra as special cases by considering suitable values of $\alpha$. Finally, we conclude the paper by discussing the main results and suggesting several directions for future research.

\section{Preliminaries}\label{s2}
Let $\Gamma$ be a connected graph with the vertex set $V(\Gamma)$. Two vertices $u, v \in V(\Gamma)$ are said to be adjacent if there is an edge between them, denoted by $u \sim v$. For a vertex $u \in V(\Gamma)$, the neighbourhood of $u$, denoted by $N(u)$, is defined by $N(u)=\{v \ ; \  u \sim v\}$. A clique of a graph $\Gamma$ is the complete subgraph, denoted by $C$. A clique $C$ is the maximal if there is no vertex $v \in V(\Gamma) \setminus V(C)$ such that $V(C) \cup \{v\}$ is complete graph. A maximum clique is a clique of largest size among all cliques of $\Gamma$. Equivalently, the subgraph induced by $C$ is a complete graph. The distance between two vertices $u,v \in V(\Gamma)$, denoted by $d(u,v)$, is the length of the shortest path connecting them. Unless stated otherwise, all graph-theoretic notation and terminology used in this paper are standard and follow \cite{bh(2012), Ssk(2026)}. The distance matrix of $\Gamma$, denoted by $D(\Gamma)$, is defined as $D(\Gamma) = (d(u,v))_{u,v \in V(\Gamma)}$, where each entry represents the distance between the corresponding vertices. The transmission of a vertex $v_i$, denoted as $Tr_\Gamma(v_i)$, is the sum of distances from $v_i$ to all other vertices, that is, $Tr_\Gamma(v_i) = \sum_{u \in V(\Gamma)} d(u,v_i),$  for simplicity, we denote $Tr_\Gamma(v_i)$ as $Tr_i$. The collection $(Tr_1, Tr_2, \ldots, Tr_n)$ is known as the transmission degree sequence of the graph $\Gamma$, where $n$ is the cardinality of the set $V(\Gamma)$. Aouchiche and Hansen \cite{ah(2013), ah(2014)} introduced the distance Laplacian matrix and the distance signless Laplacian matrix of a connected graph $\Gamma$, defined by $D^L(\Gamma)=Tr(\Gamma)-D(\Gamma) \text{and} D^Q(\Gamma)=Tr(\Gamma)+D(\Gamma),$ respectively, where $Tr(\Gamma)=\operatorname{diag}(Tr_1,Tr_2,\ldots,Tr_n)$ is the diagonal matrix of vertex transmissions. Later, Cui et al. \cite{cht(2019)} introduced the generalised distance matrix of a graph $\Gamma$ as a convex combination of $Tr(\Gamma)$ and $D(\Gamma)$ 
$$D_\alpha(\Gamma) = \alpha\,Tr(\Gamma) + (1-\alpha)\,D(\Gamma), \ 0 \le \alpha \le 1.$$ 
This definition unifies several well-known matrices as special cases, such as $D_0(\Gamma) = D(\Gamma), D_1(\Gamma) = Tr(\Gamma),  2D_{\frac{1}{2}}(\Gamma) = D^Q(\Gamma).$ Moreover, the difference between two generalised distance matrices $D_\alpha(\Gamma)$ and $D_\beta(\Gamma)$ is a scalar multiple of the distance Laplacian matrix given as $D_\alpha(\Gamma) - D_\beta(\Gamma) = (\alpha - \beta)\, D^L(\Gamma).$ Therefore, many spectral properties derived for $D_\alpha(\Gamma)$ remain valid for $D(\Gamma)$, $Tr(\Gamma)$, $D^Q(\Gamma)$, and $D^L(\Gamma)$, typically with comparable proof techniques. Thus, $D_\alpha(\Gamma)$ provides a unified framework connecting the spectral theories of the distance and distance signless Laplacian matrices. Since $D_\alpha(\Gamma)$ is a real symmetric matrix, all of its eigenvalues are real. They can be arranged in non-increasing order as $\lambda_1 \ge \lambda_2 \ge \cdots \ge \lambda_n.$ For recent advances on the $D_\alpha$ matrix and its spectral properties, we refer the reader to \cite{abg(2020), abp(2022), abgd(2024), lxs(2021)} and the references therein. 

Let $N$ be an $n \times n$ matrix, and let its rows and columns be grouped according to a partition $\mathcal{P} = \{\mathcal{P}_1, \mathcal{P}_2, \ldots, \mathcal{P}_m\},$ where each $\mathcal{P}_i \subseteq X  = \{1, 2, \ldots, n\}$ and the union of all $\mathcal{P}_i$ gives the full index set $X$. Then the matrix $N$ can be written in block form as:

$$N =
\begin{pmatrix}
N_{1,1} & N_{1,2} & \cdots & N_{1,m} \\
N_{2,1} & N_{2,1} & \cdots & N_{2,m} \\
\vdots & \vdots & \ddots & \vdots \\
N_{m,1} & N_{m,2} & \cdots & N_{m,m}
\end{pmatrix},$$
where each block $N_{i,j}$ corresponds to the submatrix formed by selecting the rows indexed by $\mathcal{P}_i$ and the columns indexed by $\mathcal{P}_j$, for $1 \leq i, j \leq m$. The quotient matrix associated with this block structure is the $m \times m$ matrix $\mathcal{Q} = (\mathcal{Q}_{i,j})$, where each entry $\mathcal{Q}_{i,j}$  is defined as the average row sum of the block $N_{i,j},$ (see \cite[Section 2.3]{bh(2012)}). The partition $\mathcal{P}$ is equitable if every block $N_{i,j}$ has constant row sums (and hence constant column sums, since $N$ is square). In this case, the quotient matrix $\mathcal{Q}$ is called an equitable quotient matrix.

The quotient matrix $\mathcal{Q}$ can also be characterised in terms of an equitable partition of the vertex set of a graph $\Gamma$. A vertex partition $\{V_1, V_2, \ldots, V_m\}$ of the vertex set $V(\Gamma)$ is called equitable if, for each index $i$, and for every pair of vertices $u, v \in V_i$,  $|N(u) \cap V_j| = |N(v) \cap V_j|=q_{ij} \ \text{for all} \ j$. It is easy to see that the quotient matrix $\mathcal{Q}=(q_{ij})$. In general, the eigenvalues of the quotient matrix $\mathcal{Q}$ interlace those of the matrix $M$. In this case, we have the following useful result.
\begin{prop} {\bf (Brouwer and Haemers \cite{bh(2012)})}
If the partition $P$ of rows (or columns) of the matrix $N$ is equitable, then every eigenvalue of the quotient matrix $\mathcal{Q}$ is also an eigenvalue of $N$.
\end{prop}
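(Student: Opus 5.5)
We prove the Proposition by turning an eigenvector of the quotient matrix into an eigenvector of $N$, using the characteristic matrix of the partition. Let $\mathcal{P}=\{\mathcal{P}_1,\ldots,\mathcal{P}_m\}$ be the equitable partition of $X=\{1,\ldots,n\}$. Define the $n\times m$ characteristic matrix $S=(s_{kj})$ by $s_{kj}=1$ if $k\in\mathcal{P}_j$ and $s_{kj}=0$ otherwise. The columns of $S$ are the indicator vectors of the nonempty, pairwise disjoint cells $\mathcal{P}_j$. Hence these columns are nonzero and mutually orthogonal, so $S$ has full column rank $m$.

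The key step is the matrix identity $NS=S\mathcal{Q}$, which we verify entrywise. Fix a row index $k\in\mathcal{P}_i$ and a column index $j$. Then
\[(NS)_{kj}=\sum_{l\in\mathcal{P}_j}N_{kl},\]
which is the $k$-th row sum of the block $N_{i,j}$. Since the partition is equitable, every row of $N_{i,j}$ has the same sum, so this common value equals the average row sum $\mathcal{Q}_{i,j}$. On the other side, row $k$ of $S$ has a single nonzero entry, a $1$ in column $i$. Therefore
\[(S\mathcal{Q})_{kj}=\mathcal{Q}_{i,j}.\]
The two sides agree for every $k$ and $j$, which proves $NS=S\mathcal{Q}$.

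Now let $\lambda$ be an eigenvalue of $\mathcal{Q}$, with eigenvector $x\neq 0$, possibly complex. Applying the identity gives
\[N(Sx)=S\mathcal{Q}x=\lambda\, Sx.\]
Because $S$ has full column rank, $Sx\neq 0$. So $Sx$ is an eigenvector of $N$ with eigenvalue $\lambda$, and $\lambda$ is an eigenvalue of $N$, as claimed. Concretely, $Sx$ is the vector that is constant, equal to $x_i$, on each cell $\mathcal{P}_i$.

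For a partition of the columns the argument is symmetric. One applies it to $N^{T}$, using $S^{T}N=\mathcal{Q}'S^{T}$ for the column-based quotient $\mathcal{Q}'$, and uses that $N$ and $N^{T}$ have the same eigenvalues. In our setting $N$ will be $D_\alpha(\Gamma)$, which is symmetric, so no separate argument is needed.

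The only step needing care is the identification of $(NS)_{kj}$ with $\mathcal{Q}_{i,j}$. In general $\mathcal{Q}_{i,j}$ is only an average of row sums, and it coincides with each individual row sum exactly because the partition is equitable. Everything else is linear algebra.
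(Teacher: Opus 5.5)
Your proof is correct and is essentially the standard argument in Brouwer and Haemers, which the paper cites rather than reproduces: the characteristic matrix $S$ of an equitable partition satisfies $NS=S\mathcal{Q}$ and has full column rank, so $Sx$ is a nonzero eigenvector of $N$ whenever $\mathcal{Q}x=\lambda x$. Your handling of the column version also works, because the paper's average-row-sum quotient equals $D^{-1}\mathcal{Q}'D$ with $D=\operatorname{diag}(|\mathcal{P}_1|,\ldots,|\mathcal{P}_m|)$, so it has the same eigenvalues as your column-sum quotient $\mathcal{Q}'$.
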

	
Let $\Gamma$ be a graph whose corresponding matrix is structured in the following block form
	
\begin{equation}\label{1}
M=\begin{pmatrix}
A & A_1 & A_1 & \cdots & A_1 \\
A^t_1 & B & B_1 & \cdots & B_1\\
A^t_1 & B_1 & B & \cdots & B_1\\
\vdots & \vdots & \vdots & \ddots & \vdots\\
A^t_1 & B_1 & B_1  & \cdots &  B
\end{pmatrix}.\end{equation}

Here, the blocks $A$, $A_1$, $B$, and $B_1$ are matrices of orders $t \times t$, $t \times s$, $s \times s$, and $s \times s$, respectively. Then the order of the matrix $M$ is $n = t + m_1 s$, where $m_1$ represents the number of copies of the block $B$. In this setup, the spectrum of the full matrix can be determined by analysing the spectra of the smaller component matrices, see \cite{ft(2016)}. Let $\sigma^k(M)$ denote the multiset consisting of $k$ copies of the spectrum of the matrix $M$, and $\sigma(M)$ represent the set of eigenvalues of $M$. Then the following result describes the relationship between the spectrum of the original matrix and the spectra of its component matrices.

\begin{prop}{\bf (Fritscher and Trevisan \cite{ft(2016)})}\label{p2.2}
If $M$ is the matrix given in (1), with $m_1 (\geq 1$) copies of the block $B$. Then 
\begin{enumerate}
\item $\sigma(B-B_1) \subseteq \sigma(M)$ with multiplicity $m_1 - 1$.
\item $\sigma(M) \setminus \sigma^{(m_1-1)}(B-B_1)=\sigma(M_1)$ is the set of the remaining $t+s$ eigenvalues of $M$, where $M_1=\begin{pmatrix}
A & \sqrt{m_1}A_1\\
\sqrt{m_1}A^t_1 & B+(m_1-1)B_1
\end{pmatrix}$.
\end{enumerate}
\end{prop}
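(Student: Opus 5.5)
We prove Proposition~\ref{p2.2} by an explicit change of basis that block-diagonalises $M$. Write $M$ in the block form (\ref{1}) with $m_1$ diagonal blocks $B$. Identify the index set of the last $m_1 s$ coordinates with $\mathbb{R}^{m_1}\otimes\mathbb{R}^{s}$. With this identification the lower-right $m_1s\times m_1s$ part of $M$ is
\[
I_{m_1}\otimes (B-B_1)+J_{m_1}\otimes B_1 ,
\]
and the off-diagonal parts are $\mathbf{1}_{m_1}^t\otimes A_1$ and $\mathbf{1}_{m_1}\otimes A_1^t$. Here $J_{m_1}$ is the all-ones matrix and $\mathbf{1}_{m_1}$ the all-ones column vector.

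Choose a real orthogonal $m_1\times m_1$ matrix $U$ whose first column is $\frac{1}{\sqrt{m_1}}\mathbf{1}_{m_1}$. Its remaining columns form an orthonormal basis of $\mathbf{1}_{m_1}^{\perp}$. Set $W=I_t\oplus (U\otimes I_s)$, which is orthogonal, and compute $W^tMW$ using $U^tJ_{m_1}U=\operatorname{diag}(m_1,0,\ldots,0)$ and $U^t\mathbf{1}_{m_1}=\sqrt{m_1}\,e_1$.

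\begin{itemize}
\item The term $I\otimes(B-B_1)$ is unchanged.
\item The term $J\otimes B_1$ becomes $m_1B_1$ placed only in the first $s\times s$ diagonal slot.
\item The coupling blocks become $\sqrt{m_1}A_1$ and $\sqrt{m_1}A_1^t$, attached only to that first slot.
\end{itemize}

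Hence
\[
W^tMW=\begin{pmatrix} A & \sqrt{m_1}A_1\\ \sqrt{m_1}A_1^t & B+(m_1-1)B_1\end{pmatrix}\oplus \underbrace{(B-B_1)\oplus\cdots\oplus(B-B_1)}_{m_1-1},
\]
because $(B-B_1)+m_1B_1=B+(m_1-1)B_1$. Since $W^tMW$ is similar to $M$, the spectrum of $M$ is the multiset union $\sigma(M_1)\cup\sigma^{(m_1-1)}(B-B_1)$. This gives both parts of the proposition.

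The only care needed is in part 1. Each eigenvalue of $B-B_1$ appears with multiplicity at least $m_1-1$. It can appear more often if it is also an eigenvalue of $M_1$, so "with multiplicity $m_1-1$" should be read as the multiset statement. Orthogonality of $U$ is not essential: any invertible $U$ with first column $\mathbf{1}$ works, at the cost of replacing the factor $\sqrt{m_1}$ by $1$ and $m_1$. The orthogonal choice keeps $M_1$ symmetric, which is what is wanted when $M$ is a symmetric $D_\alpha$ matrix.

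There is no real obstacle here. The main step is checking the Kronecker-product bookkeeping, in particular that the transformed coupling block $(\mathbf{1}^t\otimes A_1)(U\otimes I_s)=(\mathbf{1}^tU)\otimes A_1$ equals $\sqrt{m_1}\,e_1^t\otimes A_1$.
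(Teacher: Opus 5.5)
Your proof is correct and complete. The Kronecker descriptions $I_{m_1}\otimes(B-B_1)+J_{m_1}\otimes B_1$, $\mathbf{1}_{m_1}^t\otimes A_1$ and $\mathbf{1}_{m_1}\otimes A_1^t$ are right. The identities $\mathbf{1}^tU=\sqrt{m_1}\,e_1^t$ and $U^tJ_{m_1}U=m_1e_1e_1^t$ then give exactly $W^tMW=M_1\oplus(B-B_1)^{\oplus(m_1-1)}$. You use no symmetry of $A$, $B$ or $B_1$, and the case $m_1=1$ is trivially covered.

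The paper does not prove this proposition; it quotes it from Fritscher and Trevisan. So there is no in-text argument to compare with, and your orthogonal change of basis gives a self-contained justification of the statement as written.

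Your multiset reading of part 1 (multiplicity at least $m_1-1$) is the right one, and it is what the paper actually uses in Theorem~\ref{t1}. There $(4n-1)\alpha-1$ is an eigenvalue of both $B-B_1$ and $M_1$, and its stated multiplicity $n$ is $(n-1)+1$.

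One small point about your side remark. For an arbitrary invertible $U$ with first column $\mathbf{1}$, the conjugated matrix is only block upper triangular. It is block diagonal only when the remaining columns lie in $\mathbf{1}^{\perp}$, and then the coupling blocks are $m_1A_1$ and $A_1^t$, a matrix diagonally similar to $M_1$. Either way the spectral conclusion is unchanged.
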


Let $\Gamma_1$ and $\Gamma_2$ be two vertex-disjoint graphs. The \emph{join} of $\Gamma_1$ and $\Gamma_2$, denoted by $\Gamma_1 \vee \Gamma_2$, is the graph obtained by taking the union of $\Gamma_1$ and $\Gamma_2$ and adding an edge between every vertex of $\Gamma_1$ and every vertex of $\Gamma_2$. A fan graph $f_{n}$ is a graph formed by joining a single vertex (called the centre) to all vertices of a path graph $P_{n}$. Equivalently, $f_n=K_1 \vee P_n,$ see Figure 1. The triangular book graph, denoted by $B_n$, is the graph consisting of $n$ triangles sharing a common edge. Equivalently, $B_n=K_2 \vee nK_1.$ Now recall the following graph families that play a central role in this paper.

\begin{deff}\label{d1}\cite{ss(2023)}
A friendship graph, denoted by $F_n$, is formed by joining a single vertex $K_1$ to $n$ disjoint copies of $K_2$, see Figure $2$. Thus the graph $F_n$ is $K_1 \vee (nK_2).$ 
\end{deff}

\begin{deff}\label{d2}\cite{bgp(2023)}
The firefly-type graph, denoted by $F_{p,n-p}$, is obtained from the friendship graph $F_n$ by replacing $p$ ($\geq 1$) copies of its $K_2$ with $K_1$. Consequently, $F_{p,n-p}$ consists of $p$ copies of $K_1$ and $n-p$ copies $K_2$. Thus the graph $F_{p,n-p}$ is $K_1 \vee \left(pK_1\cup (n-p)K_2\right)$, see Figure $3$. 
\end{deff}

\begin{deff}\label{d3}\cite{hk(2022)}
Let $\Gamma_1 = \{v_{n-1}\}$ and $B_n$ the triangular book graph with the common edge $v_{n+3}v_{n+1}$. Let $f_3 =\{v_n\} \vee P_3$, where $P_3 = v_1\,v_{n+2}\,v_{n+4}$. Then a torch graph $O_n$ is the graph with the vertex set  $V(O_n) = V(\Gamma_1)\cup V(B_n) \cup V(f_3)$ and the edge set $E(O_n)  = \{E(\Gamma_1)\cup E(B_n) \cup E(f_3)\} \cup \{v_nv_{n-1},v_{n+3}v_1,v_{n+1}v_1\}$, see Figure $4$. 
\end{deff}

\begin{figure}[H]
\centering
\begin{minipage}[t]{0.48\textwidth}
\centering
\begin{tikzpicture}[thick, scale=.8] 
\setlength{\unitlength}{1mm}
\filldraw[color=black!60, fill=black!5, very thick](2,0);

\draw (0,-.5) node { \scriptsize $\textcolor{black}{\bullet}$};
\draw (0,2.5) node { \scriptsize $\textcolor{black}{\bullet}$};
\draw (-1.5,2.5) node { \scriptsize $\textcolor{black}{\bullet}$};
\draw (-3,2.5) node { \scriptsize $\textcolor{black}{\bullet}$};
\draw (3.5,2.5) node { \scriptsize $\textcolor{black}{\bullet}$};
\draw (0,-.9) node {\scriptsize $\textcolor
	{black}{{v}}$};
\draw (0,3) node {\scriptsize $\textcolor
	{black}{v_{3}}$};    
\draw (-1.5,3) node {\scriptsize $\textcolor
	{black}{v_{2}}$};
\draw (-3,3) node {\scriptsize $\textcolor
	{black}{v_{1}}$};    
\draw (3.5,3) node {\scriptsize $\textcolor
	{black}{v_{p}}$};
\draw[blue] (0,-.5)--(0,2.5);
\draw[blue] (0,-.5)--(-1.5,2.5);
\draw [blue] (0,-.5)--(-1.5,2.5);
\draw[blue] (0,-.5)--(-3,2.5);
\draw [blue] (0,-.5)--(3.5,2.5);
\draw [blue] (0,2.5)--(-1.5,2.5);
\draw[blue] (-3,2.5)--(-1.5,2.5);
\draw [dashed][black] (.3,2.5)--(3.2,2.5);
\draw (-.3, -2) node {\small \textbf{Figure 1}: \textit{Fan graph} $f_{n}$};
\end{tikzpicture} \label{fig2}
\end{minipage}
\hfill
\centering
\begin{minipage}[t]{0.48\textwidth}
\centering
\begin{tikzpicture}[thick, scale=.5] 
\setlength{\unitlength}{1mm}
\filldraw[color=black!60, fill=black!5, very thick](2,0);
\draw (0:0) node {\scriptsize $\textcolor{black}{\bullet}$};
\draw (90:4) node {\scriptsize $\textcolor{black}{\bullet}$};
\draw (62.31:4) node {\scriptsize $\textcolor{black}{\bullet}$};
\draw (34.62:4) node {\scriptsize $\textcolor{black}{\bullet}$};
\draw (6.92:4) node {\scriptsize $\textcolor{black}{\bullet}$};
\draw (-20.77:4) node {\scriptsize $\textcolor{black}{\bullet}$};
\draw (-48.46:4) node {\scriptsize $\textcolor{black}{\bullet}$};
\draw (-76.15:4) node {\scriptsize $\textcolor{black}{\bullet}$};
\draw (-103.85:4) node {\scriptsize $\textcolor{black}{\bullet}$};
\draw (-131.54:4) node {\scriptsize $\textcolor{black}{\bullet}$};
\draw (-159.23:4) node {\scriptsize $\textcolor{black}{\bullet}$};
\draw (-214.62:4) node {\scriptsize $\textcolor{black}{\bullet}$};
\draw (-242.31:4) node {\scriptsize $\textcolor{black}{\bullet}$};
 \draw (-1.5:-.65) node {\scriptsize $\textcolor
 	{black}{{v}}$};
 \draw (90:4.4) node {\scriptsize $\textcolor
 	{black}{v_{1}}$};    
 \draw (62.31:4.4) node {\scriptsize $\textcolor
 	{black}{v_{2}}$};
 \draw (34.62:4.4) node {\scriptsize $\textcolor
 	{black}{v_{3}}$};    
 \draw (6.92:4.4) node {\scriptsize $\textcolor
 	{black}{v_{4}}$};  
 \draw (-20.77:4.4) node {\scriptsize $\textcolor
 	{black}{v_{5}}$};    
 \draw (-48.46:4.4) node {\scriptsize $\textcolor
 	{black}{v_{6}}$}; 
 \draw (-76.15:4.4) node {\scriptsize $\textcolor
 	{black}{v_{7}}$};    
 \draw (-103.85:4.4) node {\scriptsize $\textcolor
 	{black}{v_{8}}$};
 \draw (-131.54:4.4) node {\scriptsize $\textcolor
 	{black}{v_{9}}$};    
 \draw (-162.23:4.6) node {\scriptsize $\textcolor
 	{black}{v_{10}}$};  
 \draw (-214.62:4.7) node {\scriptsize $\textcolor
 	{black}{v_{n-1}}$};    
 \draw (-242.31:4.4) node {\scriptsize $\textcolor
 	{black}{v_{n}}$}; 
\draw[blue] (0:0)--(90:4);
\draw[blue] (0:0)--(62.31:4);
\draw[blue] (90:4)--(62.31:4);
\draw[blue] (0:0)--(34.62:4);
\draw[blue] (0:0)--(6.92:4);
\draw[blue] (34.62:4)--(6.92:4);
\draw[blue] (0:0)--(-20.77:4);
\draw[blue] (0:0)--(-48.46:4);
\draw[blue] (-20.77:4)--(-48.46:4);
\draw[blue] (0:0)--(-76.15:4);
\draw[blue] (0:0)--(-103.85:4);
\draw[blue] (-76.15:4)--(-103.85:4);
\draw[blue] (0:0)--(-131.54:4);
\draw[blue] (0:0)--(-159.23:4);
\draw[blue] (-131.54:4)--(-159.23:4);
\draw[blue] (0:0)--(-214.62:4);
\draw[blue] (0:0)--(-242.31:4);
\draw[blue] (-214.62:4)--(-242.31:4);
\draw[black,dashed] (-210:3.9)--(-162:3.9);
\draw (-.5, -5.5) node {\small \textbf{Figure 2}: \textit{Friendship Graph} $F_n$};
\end{tikzpicture} \label{fig1}
\end{minipage}

\hfill
\centering
\begin{minipage}[t]{0.48\textwidth}
\centering
\begin{tikzpicture}[thick, scale=.5] 
\setlength{\unitlength}{1mm}
\filldraw[color=black!60, fill=black!5, very thick](2,0);
\draw (0:0) node {\scriptsize $\textcolor{black}{\bullet}$};
\draw (90:4) node {\scriptsize $\textcolor{black}{\bullet}$};
\draw (62.31:4) node {\scriptsize $\textcolor{black}{\bullet}$};
\draw (34.62:4) node {\scriptsize $\textcolor{black}{\bullet}$};
\draw (6.92:4) node {\scriptsize $\textcolor{black}{\bullet}$};
\draw (-20.77:4) node {\scriptsize $\textcolor{black}{\bullet}$};
\draw (-48.46:4) node {\scriptsize $\textcolor{black}{\bullet}$};
\draw (-103.85:4) node {\scriptsize $\textcolor{black}{\bullet}$};
\draw (-131.54:4) node {\scriptsize $\textcolor{black}{\bullet}$};
\draw (-150.23:4) node {\scriptsize $\textcolor{black}{\bullet}$};
\draw (-173.92:4) node {\scriptsize $\textcolor{black}{\bullet}$};
\draw (-197.62:4) node {\scriptsize $\textcolor{black}{\bullet}$};
\draw (-248.31:4) node {\scriptsize $\textcolor{black}{\bullet}$};
 \draw (-80:.60) node {\scriptsize $\textcolor
 	{black}{{v}}$};
 \draw (90:4.4) node {\scriptsize $\textcolor
 	{black}{v_{1}}$};    
 \draw (62.31:4.4) node {\scriptsize $\textcolor
 	{black}{v_{2}}$};
 \draw (34.62:4.4) node {\scriptsize $\textcolor
 	{black}{v_{3}}$};    
 \draw (6.92:4.4) node {\scriptsize $\textcolor
 	{black}{v_{4}}$};  
 \draw (-20.77:4.4) node {\scriptsize $\textcolor
 	{black}{v_{5}}$};    
 \draw (-48.46:4.4) node {\scriptsize $\textcolor
 	{black}{v_{6}}$}; 
 \draw (-103.85:4.4) node {\scriptsize $\textcolor
 	{black}{v_{2(n-p)-1}}$};
 \draw (-131.54:4.6) node {\scriptsize $\textcolor
 	{black}{v_{2(n-p)}}$}; 
 \draw (-150.23:4.5) node {\scriptsize $\textcolor
 	{black}{w_{1}}$};  
\draw (-173.92:4.5) node {\scriptsize $\textcolor
 	{black}{w_{2}}$};  
 \draw (-197.62:4.5) node {\scriptsize $\textcolor
 	{black}{w_{3}}$};    
 \draw (-248.31:4.5) node {\scriptsize $\textcolor
 	{black}{w_{p}}$}; 
\draw[blue] (0:0)--(90:4);
\draw[blue] (0:0)--(62.31:4);
\draw[blue] (90:4)--(62.31:4);
\draw[blue] (0:0)--(34.62:4);
\draw[blue] (0:0)--(6.92:4);
\draw[blue] (34.62:4)--(6.92:4);
\draw[blue] (0:0)--(-20.77:4);
\draw[blue] (0:0)--(-48.46:4);
\draw[blue] (-20.77:4)--(-48.46:4);
\draw[blue] (0:0)--(-103.85:4);
\draw[blue] (0:0)--(-131.54:4);
\draw[blue] (-103.85:4)--(-131.54:4);
\draw [dashed][black] (-52.46:3.9)--(-100:3.9);
\draw[blue] (0:0)--(-150.23:4);
\draw[blue] (0:0)--(-173.92:4);
\draw[blue] (0:0)--(-197.62:4);
\draw[blue] (0:0)--(-248.31:4);
\draw [dashed][black] (-203.62:3.9)--(-245:3.9);
\draw (-.5, -5.5) node {\small \textbf{Figure 3}: \textit{Firefly-type graph} $F_{p,n-p}$};
\end{tikzpicture} \label{fig2}
\end{minipage}
\hfill
\begin{minipage}[t]{0.48\textwidth}
\centering
\begin{tikzpicture}[thick, scale=.6] 
\setlength{\unitlength}{1mm}
\filldraw[color=black!60, fill=black!5, very thick](2,0);
\draw (-2,2) node { \scriptsize $\textcolor{black}{\bullet}$};
\draw (-.5,2) node { \scriptsize $\textcolor{black}{\bullet}$};
\draw (1,2) node { \scriptsize $\textcolor{black}{\bullet}$};
\draw (-.5,5.5) node { \scriptsize $\textcolor{black}{\bullet}$};
\draw (-.5,6.5) node { \scriptsize $\textcolor{black}{\bullet}$};
\draw (-.5,8) node { \scriptsize $\textcolor{black}{\bullet}$};
\draw (1,4) node { \scriptsize $\textcolor{black}{\bullet}$};
\draw (-2,4) node { \scriptsize $\textcolor{black}{\bullet}$};
\draw (-.6,.3) node { \scriptsize $\textcolor{black}{\bullet}$};
\draw (-.6,-1) node { \scriptsize $\textcolor{black}{\bullet}$};
\draw (-2.6,2) node {\scriptsize $\textcolor
	{black}{v_{n+4}}$};
\draw (-.5,2.4) node {\scriptsize $\textcolor
	{black}{v_{1}}$};
\draw (1.6,2) node {\scriptsize $\textcolor
	{black}{v_{n+2}}$};
\draw (-.95,.3) node {\scriptsize $\textcolor
	{black}{v_{n}}$};
\draw (-.6,-1.2) node {\scriptsize $\textcolor
	{black}{v_{n-1}}$};  
\draw (1.6,4) node {\scriptsize $\textcolor
	{black}{v_{n+1}}$};
\draw (-2.6,4) node {\scriptsize $\textcolor
	{black}{v_{n+3}}$};  
\draw (-.5,5.1) node {\scriptsize $\textcolor
	{black}{v_{2}}$};  
\draw (-.5,6) node {\scriptsize $\textcolor
	{black}{v_{3}}$};
\draw (-.5,8.2) node {\scriptsize $\textcolor
	{black}{v_{n-2}}$};  
\draw[blue] (-2,2)--(-.6,.3);
\draw[blue] (-.6,.3)--(-.6,-1);
\draw[blue] (1,2)--(-.6,.3);
\draw[blue] (-2,2)--(-.5,2);
\draw[blue] (-.6,.3)--(-.6,-1);
\draw[blue] (1,2)--(-.5,2);
\draw[blue] (-.6,.3)--(-.5,2);
\draw[blue] (1,4)--(-.5,2);
\draw[blue] (-2,4)--(-.5,2);
\draw[blue] (-2,4)--(1,4);
\draw[blue] (1,4)--(-.5,5.5);
\draw[blue] (-2,4)--(-.5,5.5);
\draw[blue] (1,4)--(-.5,6.5);
\draw[blue] (-2,4)--(-.5,6.5);
\draw[blue] (1,4)--(-.5,6.5);
\draw[blue] (-2,4)--(-.5,6.5);
\draw[blue] (1,4)--(-.5,8);
\draw[blue] (-2,4)--(-.5,8);
\draw [dashed][black] (-.5,8)--(-.5,6.5);
\draw (-.5, -2.3) node {\small \textbf{Figure 4}: \textit{Torch graph} $O_n$};
\end{tikzpicture} \label{fig3}
\end{minipage}
\end{figure}

\section{Classification of groups through their power graphs}\label{s3}
Let $G$ be a finite group. The exponent of $G$, denoted by $\exp(G)$, is the smallest positive integer $q$ such that $g^{q}=e, \ \text{for every } g\in G$, where $e$ is the identity element of $G$. Equivalently,
$$\exp(G)=\min\{q\in\mathbb{Z}^{+}: g^{q}=e \text{ for all } g\in G\}
=\operatorname{lcm}\{o(g): g\in G\},$$
where $o(g)$ denotes the order of the element $g$. For the basic concepts and results of group theory, we refer the reader to \cite{g(2021)}.
In this section, we classify the groups of exponent $q$ whose power graphs are the friendship, firefly-type, or torch graphs.

\begin{lem}\label{l1}
Let $G$ be a finite group of exponent $6$. Then
\begin{enumerate}
    \item if $G$ has an element of order $6$, then the power graph $P(G)$ is neither  friendship nor  firefly-type, 
    \item if $G$ has no element of order $6$, then $P(G)\cong F_{p,n-p}$.
\end{enumerate}
\end{lem}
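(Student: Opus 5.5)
The plan is a direct structural analysis of $P(G)$ according to the possible element orders. Since $\exp(G)=6=\operatorname{lcm}\{o(g):g\in G\}$, every element of $G$ has order $1,2,3$ or $6$. Moreover, $G$ contains at least one element of order $2$ and at least one element of order $3$, because otherwise the lcm could not equal $6$. Throughout, I will use that the identity $e$ lies in every cyclic subgroup, so $e$ is adjacent to every other vertex of $P(G)$; this vertex will play the role of the centre $K_1$.

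For part (1), suppose $g\in G$ has $o(g)=6$. The cyclic subgroup $\langle g\rangle=\{e,g,g^2,\dots,g^5\}$ induces a complete subgraph of $P(G)$, since $P(\mathbb{Z}_6)$ contains the clique on its generator together with all its powers. In fact the generators $g,g^5$ are adjacent to everything in $\langle g\rangle$. Already $\{e,g,g^5,g^2\}$ is a $K_4$, because $g^2\in\langle g\rangle=\langle g^5\rangle$. On the other hand, $F_n=K_1\vee nK_2$ and $F_{p,n-p}=K_1\vee(pK_1\cup(n-p)K_2)$ are $K_4$-free, since removing the centre leaves a graph whose components have at most two vertices. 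Hence $P(G)$ is neither a friendship nor a firefly-type graph.

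For part (2), all non-identity elements have order $2$ or $3$. I will determine the adjacencies among non-identity vertices case by case.
\begin{enumerate}
\item If $o(x)=2$, then $\langle x\rangle=\{e,x\}$. For $y\neq x$ non-identity, $y\in\langle x\rangle$ is impossible. Also $x\in\langle y\rangle$ forces $2\mid o(y)$, hence $o(y)=2$ and $y=x$, a contradiction. So each involution is adjacent only to $e$.
\item If $o(x)=o(y)=3$ and $x\sim y$, then one of $\langle x\rangle,\langle y\rangle$ contains the other, and since both have order $3$ they coincide.
\item Thus the elements of order $3$ split into the sets $\langle x\rangle\setminus\{e\}=\{x,x^2\}$, each inducing a $K_2$, with no edges between distinct such pairs and no edges to involutions.
\end{enumerate}
Letting $p\ge 1$ be the number of involutions and $n-p\ge 1$ the number of subgroups of order $3$, I conclude $P(G)\cong K_1\vee\bigl(pK_1\cup(n-p)K_2\bigr)=F_{p,n-p}$ as in Definition~\ref{d2}.

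I expect no serious obstacle. The only point requiring care is the nonadjacency argument in step (1) of part (2), namely showing that an involution cannot lie in the cyclic subgroup generated by an element of order $3$. The divisibility of orders handles this, so it reduces to Lagrange's theorem.
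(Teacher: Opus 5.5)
Your proof is correct and follows the paper's proof: the same case split, and the same analysis of involutions and order-$3$ pairs in part (2), where you also justify $p\ge 1$ and $n-p\ge 1$ via the lcm. In part (1) you exhibit the $K_4$ on $\{e,g,g^2,g^5\}$, while the paper notes that the non-adjacent vertices $g^2,g^3$ share the neighbours $e,g,g^5$; either invariant rules out friendship and firefly-type graphs. One slip to delete: $\langle g\rangle$ does not induce a complete subgraph, since $g^2\not\sim g^3$ (indeed $P(\mathbb{Z}_6)$ is not complete because $6$ is not a prime power), but your conclusion uses only the $K_4$, so nothing breaks.
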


\noindent\textbf{Proof.}
Let $\exp(G)=6$. Then the possible orders of elements in $G$ are $1, 2, 3,$ 
and $6$. Now consider the following two exhaustive cases.

\smallskip
\noindent\textbf{Case 1: $G$ contains an element of order $6$.}  

Suppose that $g\in G$ with $o(g)=6$. Then the elements $g$ and $g^5$ in $\langle g\rangle$ are adjacent to all the elements of $\langle g\rangle$ in the power graph $P(G)$. In particular, the set $N(g^2) \cap N(g^3)$ has more than one elements. Hence $P(G)$ is neither a friendship nor a firefly-type graph.

\noindent\textbf{Case 2: $G$ contains no element of order $6$.} 
Since $\exp(G) = 6$ and $G$ contain no element of order $6$, every non-identity element of $G$ has order $2$ and $3$. 
If we pick an element $g$ of order $3$ in $G$, then $P(\langle g \rangle) \cong K_3 $ and hence for two distinct subgroups $\langle g_1 \rangle$ and $\langle g_2 \rangle$ of order $3$, $P(\langle g_1 \rangle) \cap P(\langle g_2 \rangle) = e$ in $P(G)$. 
Also, taking an element $h$ of order $2$ in $G$, then $P(\langle h \rangle) \cong K_2$, that is,  $h$ is only adjacent to $e$ in $P(G)$. By Definition \ref{d2}, $P(G)$ is firefly-type and hence is not a friendship graph.
\hfill$\blacksquare$

\begin{rem}
Observe that, by Lemma \ref{l1}, if $\exp(G)=6$, then $P(G)$ is not a friendship graph. However, the converse does not hold. For example, consider $G=\mathbb{Z}_p$, where $p\neq3$ is prime, then $P(\mathbb{Z}_p)\cong K_p$,
which is not a friendship graph, while $\exp(G)=p$.
\end{rem}

\begin{thm}\label{t3.1}
Let $G$ be a finite group with exponent $q$. Then $P(G)$ is a friendship graph if and only if $q = 3$.
\end{thm}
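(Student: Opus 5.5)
We prove the two directions separately, using only the definition of $F_n=K_1\vee(nK_2)$ and elementary facts about element orders.

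\emph{Sufficiency.} Suppose $\exp(G)=3$. Every non-identity element $g$ then has order $3$, so $\langle g\rangle=\{e,g,g^{2}\}$ and $P(\langle g\rangle)\cong K_3$. The distinct cyclic subgroups of order $3$ meet only in $e$, so they partition $G\setminus\{e\}$ into pairs $\{g,g^{-1}\}$.

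We next determine the adjacencies. A non-identity $x$ is adjacent to a non-identity $y$ exactly when $y\in\langle x\rangle$ or $x\in\langle y\rangle$. Since both generate subgroups of order $3$, either condition means $\langle x\rangle=\langle y\rangle$. The identity is adjacent to every vertex. Hence $P(G)=K_1\vee\big(\tfrac{|G|-1}{2}K_2\big)$, which is $F_n$ with $n=(|G|-1)/2\ge 1$.

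\emph{Necessity.} Suppose $P(G)\cong F_n$ with $n\ge 1$. The identity is adjacent to all vertices. For $n\ge2$, the centre of $F_n$ is the only vertex of full degree, so $e$ must be the centre. For $n=1$, $F_1=K_3$, which forces $|G|=3$ and $G\cong\mathbb{Z}_3$ with $q=3$. So assume $n\ge2$. Then every non-identity vertex has degree exactly $2$ in $P(G)$.

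Any non-identity $g$ is adjacent to all other elements of $\langle g\rangle$, so $o(g)-1\le 2$, giving $o(g)\in\{2,3\}$. If $o(g)=2$, we show $g$ has degree $1$. Any neighbour $y\ne e$ of $g$ satisfies either $y\in\langle g\rangle=\{e,g\}$, which is impossible, or $g\in\langle y\rangle$. The latter forces $2\mid o(y)$ with $o(y)\in\{2,3\}$, so $o(y)=2$ and $y=g$, again impossible. Thus $\deg g=1$, contradicting degree $2$. Hence every non-identity element has order $3$, and $q=\exp(G)=3$.

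The main obstacle is making sure the centre of the friendship graph is the identity, so that every non-identity vertex has degree $2$. The full-degree argument handles this for $n\ge2$, and the case $n=1$ is checked separately as above. Lemma~\ref{l1} is consistent with this result but is not needed in the proof.
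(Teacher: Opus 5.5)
Your proof is correct, but your necessity direction takes a different route from the paper's.

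The paper works with the clique number. Since every maximal clique of $F_n$ has size $3$, $G$ can have no element of prime order $p\ge 5$ (by Cauchy's theorem), of order $4$, or of order $9$, which forces $\exp(G)\in\{2,3,6\}$. It then rules out $\exp(G)=2$ because $P(G)$ would be a star. It rules out $\exp(G)=6$ with the separate Lemma~\ref{l1}: an element of order $6$ gives two vertices with more than one common neighbour, and otherwise $P(G)$ is firefly-type with pendant vertices.

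You use degrees instead. Pinning the identity to the centre of $F_n$ (after handling $F_1=K_3$ separately) forces every non-identity vertex to have degree exactly $2$. One local argument then gives $o(g)\le 3$ and shows that any involution would be a pendant vertex.

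Your route is more elementary and uniform. It needs neither Cauchy's theorem, nor the case split on $\exp(G)$, nor Lemma~\ref{l1}, and it covers the exponent-$6$ situation automatically. The paper's clique-number argument has the minor advantage of not needing to locate the centre, so $n=1$ needs no special treatment.
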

\noindent\textbf{Proof.} First we proof the converse part. Suppose $\exp(G) = 3$. Then every non-identity element $g$ of $G$ has order $3$, and hence $P(\langle g \rangle) \cong K_3$ in $P(G)$. Moreover, any two distinct such triangles share only the identity vertex $e$. Therefore
\[
P(G) \cong K_1 \vee mK_2, \ \text{where } m = \frac{|G|-1}{2},
\]
which is the friendship graph.  

Now, suppose $P(G)$ is a friendship graph. Then every maximal clique of $P(G)$ has size exactly $3$. If $\exp(G)$ has a prime divisor $p\geq 5$, then, by Cauchy's theorem, $G$ contains an element $g$ of order $p$. In the case, $P(\langle g \rangle) \cong K_p$, which contradicts that $P(G)$ has a clique of maximum size 3. Hence, every prime divisor of $\exp(G)$ belongs to $\{2, 3\}$, and therefore $\exp(G) = 2^a 3^b$ for some integers $a, b \geq 0$. If $a \geq 2$, then there exists an element $g_1 \in G$ whose order is divisible by $4$. Let $x=g_1^{o(g_1)/4}.$ Then $o(x)=4$, and hence $P(\langle x \rangle) \cong K_4.$ Thus, $P(G)$ contains a clique $K_4$, so it cannot be a friendship graph. Therefore, $a\leq 1$. Similarly, we see that $b \le 1$. So, $\exp(G) \in \{2,3,6\}.$ by Lemma \ref{l1}, $exp(G) \neq 6$.  If $\exp(G)=2$, then $P(G) \cong K_{1, |G|-1},$ and hence not a friendship graph. So, let $\exp(G)=3$. Then by the converse part of this theorem, $P(G)$ is a friendship graph.
\hfill$\blacksquare$

\begin{cor}
Let $G$ be a finite abelian group. Then the power graph $P(G)$ is a friendship graph if and only if $G \cong \mathbb Z_3^{\,n}$ for some positive integer $n$.
\end{cor}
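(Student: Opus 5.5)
The plan is to reduce the corollary to Theorem \ref{t3.1} and then use the structure theorem for finite abelian groups. By Theorem \ref{t3.1}, for a finite group $G$ the power graph $P(G)$ is a friendship graph exactly when $\exp(G)=3$. So it suffices to show that a finite abelian group $G$ satisfies $\exp(G)=3$ if and only if $G\cong\mathbb{Z}_3^{\,n}$ for some positive integer $n$.

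For the ``if'' direction, take $G=\mathbb{Z}_3^{\,n}$ with $n\geq 1$. Every element $(a_1,\dots,a_n)$ satisfies $3(a_1,\dots,a_n)=0$, and any element with a nonzero coordinate has order exactly $3$. Hence $\exp(G)=3$, and Theorem \ref{t3.1} shows that $P(G)$ is a friendship graph, namely $K_1\vee mK_2$ with $m=(3^n-1)/2$.

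For the ``only if'' direction, suppose $G$ is finite abelian and $P(G)$ is a friendship graph. Theorem \ref{t3.1} gives $\exp(G)=3$. Then $G$ is nontrivial, since $\exp(\{e\})=1$, and every non-identity element has order $3$. By the fundamental theorem of finite abelian groups, write $G\cong \mathbb{Z}_{m_1}\times\cdots\times\mathbb{Z}_{m_k}$ with each $m_i$ a prime power greater than $1$. The generator of each factor $\mathbb{Z}_{m_i}$ gives an element of $G$ of order $m_i$, so $m_i\mid 3$ and therefore $m_i=3$ for every $i$. Thus $G\cong\mathbb{Z}_3^{\,k}$ with $k\geq1$, which is the claim with $n=k$.

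There is no substantive obstacle, since Theorem \ref{t3.1} carries all the graph-theoretic content. The only points to state explicitly are that the trivial group is excluded (its exponent is $1$, and $n$ is required to be positive) and that the prime-power orders in the decomposition are forced to be exactly $3$. An alternative to the structure theorem is to view an abelian group of exponent $3$ as a vector space over $\mathbb{F}_3$ and choose a basis.
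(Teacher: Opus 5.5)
Your proposal is correct and takes essentially the same approach as the paper, which proves the corollary by combining Theorem \ref{t3.1} with the fundamental theorem of finite abelian groups. You also write out the details the paper leaves implicit: each cyclic factor must have order exactly $3$, and exponent $3$ rules out the trivial group, which gives $n\geq 1$.
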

\noindent\textbf{Proof.} The proof follows directly from Theorem \ref{t3.1} and the fundamental theorem for finite abelian groups. \hfill$\blacksquare$

\begin{thm}\label{T2}
Let $G$ be a finite group. Then $P(G)$ is a firefly-type graph if and only if the non-identity elements of $G$ have order $2$ and $3$, only.
\end{thm}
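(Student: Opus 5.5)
The plan is to prove both directions directly from the structure of cyclic subgroups, much as in Case~2 of Lemma~\ref{l1} and the proof of Theorem~\ref{t3.1}. I will read the statement as saying that every non-identity element of $G$ has order $2$ or $3$, and that $G$ contains at least one element of order $2$. This second condition is what matches the requirement $p\geq 1$ in Definition~\ref{d2}. If the definition is also meant to force $n-p\geq 1$, then I additionally require an element of order $3$, and the argument is unchanged.

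\emph{Sufficiency.} Suppose every non-identity element has order $2$ or $3$. For $h$ of order $2$ we have $\langle h\rangle=\{e,h\}$. Moreover, $h$ is not a power of any element of order $3$, and no element of order $3$ lies in $\langle h\rangle$. Hence $N(h)=\{e\}$, so $h$ is a pendant vertex attached to $e$. For $g$ of order $3$, $P(\langle g\rangle)\cong K_3$ on $\{e,g,g^2\}$. Two distinct subgroups of order $3$ meet only in $e$, and $g$ is adjacent to no element outside $\langle g\rangle$, since any such adjacency would force an element of order $6$ or $9$. Therefore
\[
P(G)\cong K_1\vee\bigl(pK_1\cup (n-p)K_2\bigr),
\]
where $p$ is the number of involutions and $n-p$ is the number of subgroups of order $3$. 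This is $F_{p,n-p}$, with $p\geq 1$ by hypothesis.

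\emph{Necessity.} Suppose $P(G)\cong F_{p,n-p}$ with $p\geq 1$.
\begin{enumerate}
\item First, the clique number of $F_{p,n-p}$ is at most $3$. If some $g\in G$ had $o(g)\geq 4$, then $P(\langle g\rangle)$ would be a complete graph on at least $4$ vertices, which is impossible. So every non-identity element has order $2$ or $3$.
\item Next I identify the centre with $e$. The identity is adjacent to every other vertex, and in $F_{p,n-p}$ the only vertex of full degree is the centre $v$. The exceptions are the degenerate graphs $K_2=F_{1,0}$, $P_3$ and $K_3$, which are checked by hand and are consistent with the claim.
\item A pendant vertex $w_i$ is then a non-identity element whose only neighbour is $e$. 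Hence $\langle w_i\rangle=\{e,w_i\}$, and $w_i$ has order $2$.
\item If $n-p\geq 1$, an edge $v_{2j-1}v_{2j}$ of a triangle forces $o(v_{2j-1})=3$. Otherwise $v_{2j-1}$ would have order $2$ and be adjacent only to $e$.
\end{enumerate}

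The main obstacle is the bookkeeping around the precise meaning of ``order $2$ and $3$ only'' together with the boundary cases of the definition. This covers whether $n-p=0$ is allowed, so that the elementary abelian $2$-groups, whose power graph is the star $K_{1,|G|-1}$, count as firefly-type. It also covers the small graphs where the centre is not the unique vertex of full degree. Besides this, the step that needs real justification is that distinct cyclic subgroups of order $3$ intersect trivially, and that an involution is never adjacent to an element of order $3$. Both follow from Lagrange's theorem together with the absence of elements of order $6$.
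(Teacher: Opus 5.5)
Your overall plan matches the paper's proof. Sufficiency uses the cyclic-subgroup analysis of Case~2 of Lemma~\ref{l1}. Necessity bounds the clique number and reads off orders from pendant vertices and triangles. Your handling of the boundary issues is more careful than the paper's. You identify the centre with $e$, whereas the paper simply assumes it. You also ask whether the star $F_{p,0}$ is admitted, which decides whether elementary abelian $2$-groups qualify. The paper leaves this open: its conclusion that the set of orders is \emph{exactly} $\{2,3\}$ implicitly assumes both a pendant vertex and a triangle exist.

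There is one false step, in step~1. You claim that if $o(g)\ge 4$ then $P(\langle g\rangle)$ is a complete graph. By the result of Chakrabarty et al.\ quoted in the introduction, $P(\mathbb{Z}_m)$ is complete only when $m$ is a prime power. So the claim fails for $o(g)=6,10,12,\dots$. The case $o(g)=6$ is exactly the one that matters here, since a group containing elements of orders $2$ and $3$ may also contain elements of order $6$. The paper only asserts the existence of a $4$-clique, which is true.

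The repair is one line. If $o(g)=m\ge 4$, choose $x\in\langle g\rangle\setminus\{e,g,g^{-1}\}$. Since $\langle g^{-1}\rangle=\langle g\rangle$, the set $\{e,g,g^{-1},x\}$ is a $4$-clique; for $m=6$ it is, e.g., $\{e,g,g^{5},g^{2}\}$. Alternatively, after step~2, $g\neq e$ is adjacent to all $m-1\ge 3$ other elements of $\langle g\rangle$, while every non-central vertex of $F_{p,n-p}$ has degree at most $2$.

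A minor point about your degenerate cases: only $K_2=F_{1,0}$ has a second vertex of full degree. In $P_3=F_{2,0}$ the centre is the unique such vertex, and $K_3=F_{0,1}$ is already excluded by $p\ge 1$.
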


\noindent\textbf{Proof.} Suppose $P(G)$ is a firefly-type graph, that is, $P(G)$ consists of triangles and pendant edges (edges incident to pendant vertices) all sharing the single common vertex $e$. The existence of a pendant vertex $v \neq e$ in $P(G)$ gives $\langle v \rangle = \{e, v\}$, so $o(v)=2$. The existence of a triangle $\{e, u, u^2\}$ in $P(G)$ gives $\langle u \rangle = \{e, u, u^2\}$, so $o(u) = 3$. Furhter, if $G$ contains an element $g$ of $o(r) \geq 4$, then the power graph $P(G)$ contains a clique of size at least $4$, which is not possible. Hence, the set of orders of non-identity elements of $G$ is exactly $\{2, 3\}$. The converse part directly follows from the Case $2$ of Lemma \ref{l1}.
\hfill$\blacksquare$

\begin{cor}\label{c2}
Among all symmetric groups $S_n$, the power graph $P(S_n)$ is a firefly graph if and only if $n = 3$.
\end{cor}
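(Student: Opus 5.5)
We will deduce Corollary \ref{c2} directly from Theorem \ref{T2}, which says that $P(G)$ is a firefly-type graph exactly when every non-identity element of $G$ has order $2$ or $3$, and both orders occur. So it suffices to check, for each $n$, whether $S_n$ has this property.

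\emph{Step 1 ($n=3$).} The group $S_3$ has three transpositions, each of order $2$, and two $3$-cycles, each of order $3$. Hence the non-identity elements of $S_3$ have orders exactly in $\{2,3\}$, with both orders present. Theorem \ref{T2} then gives that $P(S_3)$ is firefly-type. Concretely, $P(S_3)\cong K_1\vee(3K_1\cup K_2)=F_{3,1}$.

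\emph{Step 2 ($n\ge 4$).} Here $S_n$ contains the $4$-cycle $(1\,2\,3\,4)$, which has order $4$. Its cyclic subgroup induces a clique $K_4$ in $P(S_n)$. A firefly-type graph $K_1\vee(pK_1\cup(n-p)K_2)$ has clique number at most $3$, so $P(S_n)$ is not firefly-type; this also contradicts Theorem \ref{T2} directly.

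\emph{Step 3 ($n\le 2$).} The group $S_1$ is trivial, and $S_2\cong\mathbb{Z}_2$, so $P(S_2)\cong K_2$. Neither group has an element of order $3$, so by Theorem \ref{T2} neither power graph is firefly-type. This rests on reading Definition \ref{d2} as requiring at least one triangle, i.e. $n-p\ge1$, together with a centre and $p\ge1$. If the definition is read more loosely, $K_2$ could degenerately count as $F_{1,0}$, and we would state this convention explicitly to exclude it.

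The only real obstacle is this degenerate boundary issue in Step 3: Theorem \ref{T2} demands that both orders $2$ and $3$ appear, and that is exactly what rules out $n\le 2$. The remaining steps are immediate.
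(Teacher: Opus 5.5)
Your proof is correct and follows the paper's route: reduce to Theorem \ref{T2}, verify $S_3$ directly, and exclude every other $n$. Your separate treatment of $n\le 2$ is more careful than the paper's. There you use the absence of an element of order $3$, together with the convention that $F_{p,n-p}$ has at least one triangle. The paper instead asserts in one line that every $S_n$ with $n\neq 3$ has an element of order other than $2$ or $3$, which is false for $n=1,2$.
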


\noindent\textbf{Proof.}
By Theorem \ref{T2}, $P(S_n)$ is a firefly graph if and only if the set of orders of non-identity elements of $S_n$ is exactly $\{2, 3\}$. For $n \neq 3$, $P(S_n)$ is not firefly-type as $S_n$ has an element of order neither $2$ nor $3$. For $n = 3$, $P(S_3)$ is a firefly-type graph, precisely $P(S_3) \cong K_1 \vee (3K_1 \cup K_2)$. \hfill$\blacksquare$

\begin{cor}
Let $A_n$ be the alternating group of degree $n$. Then $P(A_n)$ is a firefly-type graph if and only if $n = 4$.
\end{cor}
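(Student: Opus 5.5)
We reduce everything to Theorem \ref{T2}: $P(A_n)$ is firefly-type exactly when the set of orders of non-identity elements of $A_n$ is $\{2,3\}$. Note that a firefly-type graph $F_{p,n-p}$ requires $p\geq 1$, so at least one pendant vertex must exist. Hence we need an element of order $2$, an element of order $3$, and no element of any other order. The proof then splits according to $n$.

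\textbf{Small cases.} For $n\le 2$ the group $A_n$ is trivial. For $n=3$ we have $A_3\cong\mathbb{Z}_3$, so $P(A_3)\cong K_3$. This graph has no pendant vertex and therefore is not firefly-type; at best it is the friendship graph $F_1$, which the definition excludes since $p\geq1$. For $n=4$ we list the non-identity elements of $A_4$:
\begin{itemize}
\item the three double transpositions $(12)(34),(13)(24),(14)(23)$, each of order $2$;
\item the eight $3$-cycles, each of order $3$.
\end{itemize}
So the order set is exactly $\{2,3\}$, and Theorem \ref{T2} gives that $P(A_4)$ is firefly-type. Explicitly, $P(A_4)\cong K_1\vee(3K_1\cup 4K_2)$: the eight $3$-cycles pair up into four subgroups of order $3$.

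\textbf{Large cases.} For $n\geq 5$ it suffices to exhibit an even permutation whose order lies outside $\{1,2,3\}$. The $5$-cycle $(12345)$ is even and has order $5$. Therefore the order set of $A_n$ is not contained in $\{2,3\}$, and by Theorem \ref{T2} the graph $P(A_n)$ is not firefly-type. Alternatively, $\langle(12345)\rangle$ gives a clique $K_5$, and no firefly-type graph contains such a clique.

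The only step needing care is the boundary case $n=3$. There one must check that the definition of firefly-type excludes $p=0$, so that $K_3$ is not counted. This is settled by the requirement $p\geq1$ in Definition \ref{d2}.
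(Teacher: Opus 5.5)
Your proof is correct and follows the paper's route: reduce to Theorem~\ref{T2}, check that the non-identity elements of $A_4$ have orders exactly $\{2,3\}$, and use the even $5$-cycle $(12345)$ to rule out every $n\ge 5$. Your separate treatment of $n\le 3$ is more careful than the paper's one-line claim that $A_n$ has an element of order neither $2$ nor $3$ for all $n\neq 4$, which is false for $n\le 3$. There, $A_3\cong\mathbb{Z}_3$ gives $K_3$, which is excluded only because $p\ge 1$ forces a pendant vertex, exactly as you argue.
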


\noindent\textbf{Proof.} The proof is similar to that in the Corollary \ref{c2}. For $n \neq 4$, $P(A_n)$ is not firefly-type as $A_n$ has an element of order neither $2$ nor $3$. For $n = 4$, $P(A_4)$ is a firefly-type graph, precisely $P(A_4) \cong K_1 \vee (3K_1 \cup 4K_2)$.
\hfill$\blacksquare$

\begin{rem}
Note that when $n=3$, we have $A_n \cong \mathbb{Z}_3$, and every non-identity element has order $3$. Hence, $P(A_3)\cong K_3$, which is a friendship graph.
\end{rem}

\begin{thm}
There is no group whose power graph is isomorphic to the torch graph $O_n$. 
\end{thm}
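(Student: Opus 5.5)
The key fact is that the power graph of any finite group has a dominating vertex: the identity $e$ lies in $\langle g\rangle$ for every $g\in G$, so $e$ is adjacent to every other vertex of $P(G)$. The plan is therefore to show that the torch graph $O_n$ has no vertex of full degree $|V(O_n)|-1 = n+4$. If so, $O_n$ cannot be isomorphic to any power graph, because isomorphisms preserve degrees.

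First we read off the degrees from Definition \ref{d3}.
\begin{itemize}
\item The pendant vertex $v_{n-1}$ has degree $1$.
\item The fan centre $v_n$ is adjacent only to $v_{n-1}, v_1, v_{n+2}, v_{n+4}$, so it has degree $4$.
\item The vertex $v_1$ is adjacent to $v_n, v_{n+2}, v_{n+4}, v_{n+1}, v_{n+3}$, so it has degree $5$.
\item The vertices $v_{n+2}$ and $v_{n+4}$ have degree $2$.
\item The spine vertices $v_{n+1}$ and $v_{n+3}$ of the book $B_n$ are each adjacent to the other, to the $n$ page vertices, and to $v_1$. Depending on how $B_n$ is indexed this gives degree about $n+1$ or $n+2$; either way it is at most $n+2$.
\item The page vertices $v_2,\dots,v_{n-2}$ have degree $2$.
\end{itemize}
Every degree is therefore at most $n+2 < n+4$, so no vertex is dominating.

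The cleanest way to say this without the exact bookkeeping is through distances. The pendant vertex $v_{n-1}$ is adjacent only to $v_n$. So a dominating vertex would have to be $v_n$ itself, or $O_n$ would have only two vertices. But $v_n$ is not adjacent to $v_{n+1}$, since that edge is absent from the definition. Hence $O_n$ has no dominating vertex, while $P(G)$ always does.

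The only real obstacle is the somewhat inconsistent indexing in Definition \ref{d3}: the book $B_n$ and the named vertices $v_2,\dots,v_{n-2}$ do not quite match up in count. The argument above avoids this by relying only on two facts: $v_{n-1}$ is pendant with unique neighbour $v_n$, and $v_n$ is not adjacent to the spine vertex $v_{n+1}$. Both are explicit in the definition, so the conclusion holds for every $n$.
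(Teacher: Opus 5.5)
Your argument is correct and is essentially the paper's: the paper compares $\Delta(P(G)) = |G|-1 = n+3$, which comes from the identity being adjacent to every vertex, with $\Delta(O_n)$, and this is exactly your observation that $P(G)$ has a dominating vertex while $O_n$ does not. Your pendant-vertex check ($v_{n-1}$ has only the neighbour $v_n$, and $v_n$ is not adjacent to $v_{n+1}$) is a tidier certificate than the paper's claim $\Delta(O_n)=n-1$, which fails for small $n$ because $\deg v_1 = 5$; the one correction is that $|V(O_n)|-1 = n+3$, not $n+4$.
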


\noindent\textbf{Proof.}
Let $O_n$ be the torch graph and $P(G)$ the power graph of a group $G$. Let the number of elements in $O_n$ (see Figure 4) and $G$ be $n+4$. The proof now follows by considering the maximum degree $\Delta$ in both the graphs. Note that $\Delta (O_n) = n-1$ while $\Delta (P(G)) = n+3$. So, $P(G) \ncong O_n$. 
\hfill$\blacksquare$

\section{Generalized distance spectrum of special classes of graphs}\label{s4}

In the preceding section, we characterised the finite groups whose power graphs are isomorphic to the friendship graph, the firefly-type graph, and the torch graph. The spectral properties of these graph families have been investigated by several researchers \cite{aj(2014),am(2019)}. In \cite{ss(2023)}, the authors studied the spectral radius of the friendship and weak friendship graphs. In \cite{bgp(2023)},  the authors obtained the $A_\alpha$-spectra of the friendship and firefly-type graphs. Furthermore, \cite{eofl(2019)} derived a partial factorisation of the $A_\alpha$-characteristic polynomial of the firefly graph, yielding several explicit eigenvalues. Motivated by these results and our characterisation of the corresponding power graphs, here, we discuss the generalised distance spectra of these graph families.

\begin{thm}\label{t1}
The generalised distance spectrum of the friendship graph $F_n$ is 
\[
\operatorname{Spec}_{D_\alpha}(F_{n})
=
\left\{
((4n + 1)\alpha - 3)^{\,n-1},
((4n - 1)\alpha - 1)^{\,n},
\lambda_+,\lambda_{-}
\right\}, \, \text{where}
\] 
\[
   \lambda_{\pm}= \frac{(2n + 1)\alpha + 4n - 3 \pm 
    \sqrt{\left((2n + 1)\alpha + 4n - 3\right)^2 
    - 8n(4n\alpha-\alpha-1)}}{2}.
    \]
\end{thm}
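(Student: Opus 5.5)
The plan is to write $D_\alpha(F_n)$ in the block form (\ref{1}) and apply Proposition \ref{p2.2}. The first step is to compute distances and transmissions. Label the centre $v$ and the $n$ copies of $K_2$ as $\{v_{2i-1},v_{2i}\}$, $1\le i\le n$. Then $d(v,v_j)=1$ for every $j$, and $d(v_{2i-1},v_{2i})=1$. Any two outer vertices lying in different triangles are at distance $2$. Hence $Tr(v)=2n$, and each outer vertex has $Tr=2+2(2n-2)=4n-2$.

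Next I order the vertices as $v$ followed by the $n$ pairs. This gives exactly the form (\ref{1}) with $t=1$, $s=2$ and $m_1=n$, where
\[
A=(2n\alpha),\quad A_1=(1-\alpha)\begin{pmatrix}1&1\end{pmatrix},\quad B=\begin{pmatrix}(4n-2)\alpha&1-\alpha\\ 1-\alpha&(4n-2)\alpha\end{pmatrix},\quad B_1=2(1-\alpha)J_2 .
\]
By Proposition \ref{p2.2}(1), $\sigma(B-B_1)$ contributes with multiplicity $n-1$. The matrix $B-B_1$ has diagonal $4n\alpha-2$ and off-diagonal $-(1-\alpha)$. Its eigenvalues are $4n\alpha-2\mp(1-\alpha)$, that is, $(4n+1)\alpha-3$ and $(4n-1)\alpha-1$, each with multiplicity $n-1$.

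The remaining three eigenvalues are those of the $3\times 3$ matrix $M_1$ from Proposition \ref{p2.2}(2). Its lower block $B+(n-1)B_1$ has the form $xI+yJ_2$. The vector $(0,1,-1)^t$ is an eigenvector of $M_1$, because $\sqrt{n}A_1$ annihilates $(1,-1)^t$. Its eigenvalue is the diagonal minus off-diagonal difference of $B$, namely $(4n-2)\alpha-(1-\alpha)=(4n-1)\alpha-1$. This supplies the $n$-th copy of $(4n-1)\alpha-1$.

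On the orthogonal complement, spanned by $(1,0,0)^t$ and $(0,1,1)^t$, the action is equivalently the equitable quotient matrix of the partition $\{v\}\cup\{\text{outer vertices}\}$:
\[
\mathcal{Q}=\begin{pmatrix}2n\alpha & 2n(1-\alpha)\\ 1-\alpha & 4n-3+\alpha\end{pmatrix}.
\]
Here the $(2,2)$ entry is $(4n-2)\alpha+(1-\alpha)+2(2n-2)(1-\alpha)$. Then $\operatorname{tr}\mathcal{Q}=(2n+1)\alpha+4n-3$. Also $\det\mathcal{Q}=2n\alpha(4n-3+\alpha)-2n(1-\alpha)^2=2n(4n\alpha-\alpha-1)$. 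The quadratic formula then yields $\lambda_\pm$ exactly as stated. A count gives $2(n-1)+1+2=2n+1$ eigenvalues, which matches $|V(F_n)|$ and completes the spectrum.

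The only delicate step is the bookkeeping in $M_1$. One must check that the antisymmetric vector really separates off, so that the multiplicity of $(4n-1)\alpha-1$ is $n$ rather than $n-1$. One must also verify that $\operatorname{tr}\mathcal{Q}$ and $\det\mathcal{Q}$ simplify to the stated discriminant. Everything else is a direct substitution into Proposition \ref{p2.2}.
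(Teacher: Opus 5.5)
Your proposal is correct and follows the paper's proof: the same labelling and transmissions, the same block form with $t=1$, $s=2$, $m_1=n$, the same eigenvalues of $B-B_1$, and Proposition~\ref{p2.2} applied to the same $3\times 3$ matrix $M_1$. The only difference is that you justify the eigenvalues of $M_1$, which the paper simply states. You do this by splitting off $(0,1,-1)^t$ and reducing to the $2\times 2$ quotient with trace $(2n+1)\alpha+4n-3$ and determinant $2n(4n\alpha-\alpha-1)$.
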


\noindent{\textbf{Proof.}} Let the vertex set $V(F_n \cong K_1 \vee nK_2) = \{v, v_{11}, v_{12}, v_{21}, v_{22}, \ldots, v_{n1}, v_{n2}\}$, where $v \sim v_{ij}$ and $v_{i1} \sim v_{i2}$, for $1 \leq i \leq n$ and $1 \leq j \leq 2$. Observed that the transmission of vertex $v$ is $Tr(v) = 2n$ and $Tr(v_{ij}) = 4n - 2$ for $1 \le i \le n$ and $ 1\leq j \leq 2 $. So, the generalised distance matrix of the graph $F_n$ is given by
\[
D_\alpha(F_n) = 
\left(
\begin{array}{c|cc|cc|c|cc}
2n\alpha & 1-\alpha & 1-\alpha & 1-\alpha & 1-\alpha & \cdots & 1-\alpha & 1-\alpha \\ \hline
1-\alpha & (4n-2)\alpha & 1-\alpha & 2(1-\alpha) &  2(1-\alpha) & \cdots &  2(1-\alpha) &  2(1-\alpha) \\
1-\alpha & 1-\alpha & (4n-2)\alpha & 2(1-\alpha) & 2(1-\alpha) & \cdots &  2(1-\alpha) &  2(1-\alpha) \\ \hline
1-\alpha &  2(1-\alpha) &  2(1-\alpha) & (4n-2)\alpha & 1-\alpha & \cdots &  2(1-\alpha) &  2(1-\alpha) \\
1-\alpha &  2(1-\alpha) &  2(1-\alpha) & 1-\alpha & (4n-2)\alpha & \cdots &  2(1-\alpha) &  2(1-\alpha) \\ \hline
\vdots & \vdots & \vdots & \vdots & \vdots & \ddots & \vdots & \vdots \\ \hline
1-\alpha & 2(1-\alpha) &  2(1-\alpha) &  2(1-\alpha) &  2(1-\alpha) & \cdots & (4n-2)\alpha & 1-\alpha \\
1-\alpha &  2(1-\alpha) &   2(1-\alpha) &   2(1-\alpha) &   2(1-\alpha) & \cdots & 1-\alpha & (4n-2)\alpha \\
\end{array}
\right)
.\]
The above matrix admits a block matrix representation as described in (1) of Section \ref{s2}, with the  components $$A=2n\alpha, A_1=\begin{pmatrix}1-\alpha & 1-\alpha\end{pmatrix}, B=\begin{pmatrix} (4n-2)\alpha & 1-\alpha\\
1-\alpha & (4n-2)\alpha\end{pmatrix}, \, \text{and} \, B_1=\begin{pmatrix}
      2(1-\alpha) &   2(1-\alpha) \\
      2(1-\alpha) &   2(1-\alpha)
\end{pmatrix},$$ where $B$ occurs $m_1=n$ times. Then the matrix $B-B_1$ has eigenvalues $(4n+1)\alpha-3$ and $(4n-1)\alpha-1$. Now, define the matrix $$M_1=\begin{pmatrix}
    2n\alpha & \sqrt{n}(1-\alpha) & \sqrt{n}(1-\alpha)\\
    \sqrt{n}(1-\alpha) & 2n\alpha+2n-2 & (2n-1)(1-\alpha)\\
    \sqrt{n}(1-\alpha) & (2n-1)(1-\alpha) & 2n\alpha+2n-2
\end{pmatrix}.$$
The eigenvalues of $M_1$ are $$(4n-1)\alpha-1, \ \lambda_{\pm}= \frac{(2n+1)\alpha+4n-3 \pm \sqrt{\left((2n+1)\alpha+4n-3\right)^2-8n(4n\alpha-\alpha-1)}}{2}.$$

By Proposition~\ref{p2.2}, the $D_\alpha(F_n)$ spectrum consists of eigenvalues $\lambda_{\pm}$, together with $(4n+1)\alpha-3$ and $(4n-1)\alpha-1$ with multiplicities $n-1$ and $n$, respectively. Hence, the result follows.
\hfill$\blacksquare$

By setting $\alpha = 0$, we obtain $D_0(G)=D(G)$. Applying Theorem~\ref{t1} in this special case yields the following result describing the distance spectrum of the friendship graph.

\begin{cor}
The distance spectrum of the friendship graph \(F_n\) is 
\[
\operatorname{Spec}_{D}(F_{n})
=
\left\{
(- 3)^{\,n-1},
(- 1)^{\,n}, 
\lambda_+,\lambda_{-}
\right\}, \, \text{where} \ 
\lambda_{\pm}
=
\frac{4n-3 \pm \sqrt{16n^2-16n+9}}{2}.
\]
\end{cor}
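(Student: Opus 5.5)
This corollary is the specialisation of Theorem~\ref{t1} at $\alpha=0$. Since $D_0(F_n)=D(F_n)$, the distance spectrum of $F_n$ is the multiset obtained by substituting $\alpha=0$ into every eigenvalue in $\operatorname{Spec}_{D_\alpha}(F_n)$. Multiplicities are unaffected: Proposition~\ref{p2.2} fixes them at $n-1$ and $n$ (from $\sigma(B-B_1)$) and $1,1$ (from $M_1$) independently of $\alpha$. So the plan is a direct substitution followed by simplification of the radicand.

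\textbf{Step 1: the linear eigenvalues.} Putting $\alpha=0$ gives $(4n+1)\alpha-3=-3$ with multiplicity $n-1$ and $(4n-1)\alpha-1=-1$ with multiplicity $n$. As a check, at $\alpha=0$ we have $B-B_1=\begin{pmatrix}-2&-1\\-1&-2\end{pmatrix}$, whose eigenvalues are indeed $-3$ and $-1$.

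\textbf{Step 2: the two remaining eigenvalues.} At $\alpha=0$ the trace term becomes $(2n+1)\alpha+4n-3=4n-3$ and the product term becomes $8n(4n\alpha-\alpha-1)=-8n$. The discriminant is therefore
\[
(4n-3)^2+8n=16n^2-24n+9+8n=16n^2-16n+9,
\]
which gives $\lambda_\pm=\frac{4n-3\pm\sqrt{16n^2-16n+9}}{2}$, as claimed.

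\textbf{Main obstacle: consistency of the count.} The only point needing care is that $M_1$ is $3\times 3$, so it has three eigenvalues, and Theorem~\ref{t1} lists one of them as $(4n-1)\alpha-1$, absorbed into the block of multiplicity $n$. The total is then $(n-1)+n+2=2n+1=|V(F_n)|$. I would confirm this at $\alpha=0$ by checking that $M_1$ has eigenvector $(0,1,-1)^t$ with eigenvalue $(2n-2)-(2n-1)=-1$. Its other two eigenvalues must then have sum $\operatorname{tr}M_1-(-1)=(4n-4)+1=4n-3$ and product $\det M_1/(-1)$. Computing $\det M_1=8n$ directly at $\alpha=0$ confirms that this product is $-8n$, consistent with Step 2.
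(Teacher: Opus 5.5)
Your main argument is the paper's own: set $\alpha=0$ in Theorem~\ref{t1}. Steps 1 and 2, giving $-3$, $-1$ and the radicand $(4n-3)^2+8n=16n^2-16n+9$, are correct.

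The consistency check in your last paragraph contains an arithmetic error. At $\alpha=0$,
\[
M_1=\begin{pmatrix}0&\sqrt n&\sqrt n\\ \sqrt n&2n-2&2n-1\\ \sqrt n&2n-1&2n-2\end{pmatrix},
\qquad \det M_1=2n\big((2n-1)-(2n-2)\big)=2n,
\]
not $8n$. For instance, when $n=1$ you get $M_1=J_3-I_3$, whose determinant is $2$. Hence $\lambda_+\lambda_-=\det M_1/(-1)=-2n$. The $-8n$ in Step 2 is $4\lambda_+\lambda_-$, the quantity subtracted from $(4n-3)^2$ under the root, not the product itself. If $\lambda_+\lambda_-$ really were $-8n$, the radicand would be $(4n-3)^2+32n$, which contradicts your Step 2. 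With the correct value $-2n$ the check does go through.

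Because the check is redundant (the substitution into Theorem~\ref{t1} already proves the corollary), the proof stands once this line is corrected. A minor wording point: Proposition~\ref{p2.2} gives multiplicity $n-1$ to \emph{both} eigenvalues of $B-B_1$. The $n$-th copy of $-1$ comes from $M_1$, as you note later.
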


By setting $\alpha = \tfrac{1}{2}$, we obtain $2D_{\frac{1}{2}}(G)=D^{Q}(G)$. Thus, by applying Theorem \ref{t1}, we obtain the following result for the distance signless Laplacian spectrum of the friendship graph.

\begin{cor}
The distance signless Laplacian spectrum of the friendship graph \(F_n\) is 
$$\operatorname{Spec}_{D}(F_{n})
=
\left\{
(4n-5)^{\,n-1},
(4n-3)^{\,n}, 
\lambda_+,\lambda_{-}
\right\}, \, \text{where} \, \lambda_{\pm}
=
\frac{10n-5 \pm \sqrt{36n^2-52n+25}}{2}.$$

\end{cor}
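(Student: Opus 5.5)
The plan is to obtain this as a direct specialisation of Theorem \ref{t1}. Since $D^{Q}(F_n)=Tr(F_n)+D(F_n)=2D_{\frac12}(F_n)$, the $D^Q$-eigenvalues are exactly twice the $D_{\frac12}$-eigenvalues, with the same multiplicities. So I would substitute $\alpha=\tfrac12$ into each eigenvalue listed in Theorem \ref{t1} and multiply the result by $2$. No new matrix analysis is needed, because Proposition~\ref{p2.2} was already applied for arbitrary $\alpha$ in that proof.

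First I treat the two eigenvalues that carry multiplicities.
\begin{itemize}
\item From $(4n+1)\alpha-3$ we get $2\bigl(\tfrac{4n+1}{2}-3\bigr)=4n-5$, with multiplicity $n-1$.
\item From $(4n-1)\alpha-1$ we get $2\bigl(\tfrac{4n-1}{2}-1\bigr)=4n-3$, with multiplicity $n$.
\end{itemize}

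Next I handle $\lambda_\pm$. At $\alpha=\tfrac12$ the linear term is $(2n+1)\alpha+4n-3=\tfrac{10n-5}{2}$. The subtracted term is $8n(4n\alpha-\alpha-1)=8n\bigl(2n-\tfrac32\bigr)=16n^2-12n$. Hence
\[
2\lambda_\pm=\frac{10n-5}{2}\pm\sqrt{\frac{(10n-5)^2}{4}-(16n^2-12n)}
=\frac{10n-5\pm\sqrt{(10n-5)^2-64n^2+48n}}{2}.
\]
Expanding the radicand gives $100n^2-100n+25-64n^2+48n=36n^2-52n+25$, which is the claimed formula.

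The only step needing care is this last one: pulling the factor $2$ inside the square root correctly, by multiplying the radicand by $4$. As a consistency check, I would also verify that the multiplicities add up to $(n-1)+n+2=2n+1=|V(F_n)|$. Note that the spectrum is labelled $\operatorname{Spec}_D$ in the statement, but what is computed is the $D^Q$-spectrum.
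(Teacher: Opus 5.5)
Your proposal is correct and follows the same route as the paper: substitute $\alpha=\tfrac12$ into Theorem~\ref{t1} and double every eigenvalue, using $D^{Q}(F_n)=2D_{\frac12}(F_n)$. Your arithmetic checks out, including moving the factor $2$ inside the radical to obtain $36n^2-52n+25$, and you are right that the label $\operatorname{Spec}_D$ in the statement should denote the $D^Q$-spectrum.
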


\noindent The next theorem extends this analysis to another family of graphs. In particular, we determine the generalised distance spectrum of the firefly-type graph, which depends on the parameter $\alpha$.

\begin{thm}\label{t2}
The generalised distance spectrum of the friendship graph $F_{p,n-p}$ is 

\noindent $\operatorname{Spec}_{D_\alpha}(F_{p,n-p})
=\Big\{
((4n-2p+1)\alpha-2)^{p-1},
((4n-2p-1)\alpha-1)^{n-p}, ((4n-2p+1)\alpha-3)^{n-p-1},
\lambda_1,\lambda_2,\lambda_3
\Big\},$ $\text{ where } \lambda_1, \lambda_2,  \text{and} \ \lambda_3$ are the roots of the cubic polynomial $p(x)=-x^3 + \big((6n - 3p + 2)\alpha + 4n - 2p - 5\big)x^2 + \big((-8n^2 - 2p^2 + 8np + 3p - 6n - 1)\alpha^2 + (-24n^2 - 6p^2 + 24np - 9p + 14n + 5)\alpha + 10n - 3p - 6\big)x + \big((32n^3 - 4p^3 - 2p^2 - 48n^2p + 24np^2 + 4np + 2p - 2n)\alpha^2 - (24n^2 + 4p^2 - 20np + 3p - 2n)\alpha + 4n - p\big).$

\end{thm}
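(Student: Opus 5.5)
We follow the strategy of Theorem \ref{t1}: write down $D_\alpha(F_{p,n-p})$ explicitly, exhibit explicit eigenvectors for the ``local'' eigenvalues, and recover the remaining three eigenvalues from an equitable quotient matrix via Proposition 2.1.

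\textbf{Setup.} Label the vertices as in Figure 3:
\begin{itemize}
\item the centre $v$;
\item the pendant vertices $w_1,\dots,w_p$;
\item the triangle vertices $v_{i1},v_{i2}$ for $1\le i\le n-p$, with $v_{i1}\sim v_{i2}$.
\end{itemize}
There are $2n-p+1$ vertices and the diameter is $2$. The distances are:
\begin{itemize}
\item $d(v,x)=1$ for every $x\neq v$;
\item $d(v_{i1},v_{i2})=1$;
\item every other pair of distinct non-central vertices is at distance $2$.
\end{itemize}
Hence the transmissions are
\[
Tr(v)=2n-p,\qquad Tr(w_j)=1+2(2n-p-1)=4n-2p-1,\qquad Tr(v_{ij})=2+2(2n-p-2)=4n-2p-2.
\]
This determines $D_\alpha(F_{p,n-p})$ completely. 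Its entries are $\alpha Tr$ on the diagonal, $1-\alpha$ for adjacent pairs, and $2(1-\alpha)$ for the remaining pairs.

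\textbf{Local eigenvalues.} Instead of Proposition \ref{p2.2} (the two kinds of leaves have different block sizes), we verify three families of eigenvectors directly.
\begin{enumerate}
\item Take any vector supported on $\{w_1,\dots,w_p\}$ with coordinate sum $0$. The rows of $v$ and of the triangle vertices see only the sum, so they vanish. In a pendant row we get $\alpha(4n-2p-1)x_j+2(1-\alpha)(-x_j)$. This gives the eigenvalue $(4n-2p+1)\alpha-2$ with multiplicity $p-1$.
\item Take a vector equal to $1$ on $v_{i1}$ and $-1$ on $v_{i2}$. We get $\alpha(4n-2p-2)-(1-\alpha)=(4n-2p-1)\alpha-1$, with $n-p$ independent vectors.
\item Take vectors constant on each triangle, with values summing to $0$ over the triangles. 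Own triangle vertex: $\alpha(4n-2p-2)+(1-\alpha)$. Other triangles: each contributes $2\cdot 2(1-\alpha)$ per unit, and these total $-4(1-\alpha)$ times the own value. This gives $(4n-2p+1)\alpha-3$ with multiplicity $n-p-1$.
\end{enumerate}
These vectors are mutually orthogonal and span a space of dimension $(p-1)+(n-p)+(n-p-1)=2n-p-2$.

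\textbf{Quotient matrix.} The orthogonal complement of that space is spanned by the indicator vectors of the partition $\{v\}$, $W=\{w_j\}$, $T=\{v_{ij}\}$. This partition is equitable for $D_\alpha$, with quotient matrix
\[
\mathcal{Q}=\begin{pmatrix}
(2n-p)\alpha & p(1-\alpha) & 2(n-p)(1-\alpha)\\
1-\alpha & (4n-2p-1)\alpha+2(p-1)(1-\alpha) & 4(n-p)(1-\alpha)\\
1-\alpha & 2p(1-\alpha) & (4n-2p-2)\alpha+(1-\alpha)+4(n-p-1)(1-\alpha)
\end{pmatrix}.
\]
By Proposition 2.1 its three eigenvalues $\lambda_1,\lambda_2,\lambda_3$ are the remaining eigenvalues of $D_\alpha(F_{p,n-p})$. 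The total count is then $2n-p+1$, as required.

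\textbf{Main obstacle.} The remaining step is to expand $\det(\mathcal{Q}-xI)$ and match it with the stated cubic $p(x)$. This is routine but error-prone. We would first check the trace: $(6n-3p+2)\alpha+4n-2p-5$ against the $x^2$ coefficient. Then we would test the specialisations $\alpha=0$ and $\alpha=1$, and $p=0$, where we can compare with Theorem \ref{t1}. Only after these checks would we expand the $x$ and constant coefficients in full.

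Finally, note that the statement calls $F_{p,n-p}$ ``the friendship graph''; it should read the firefly-type graph.
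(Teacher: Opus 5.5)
Your proposal is correct and follows essentially the same route as the paper. It uses the same three eigenvector families (the paper's $Z_i$, $Y_i$, $X_i$) for $(4n-2p+1)\alpha-2$, $(4n-2p-1)\alpha-1$ and $(4n-2p+1)\alpha-3$, and the same equitable quotient matrix on $\{v\}$, $W$, $T$ for the remaining three eigenvalues; your orthogonal-complement remark makes the multiplicity count more explicit than the paper does. The determinant expansion you left open, which the paper also omits, does go through: $\det(\mathcal{Q}-xI)$ agrees with the stated cubic coefficient by coefficient (the $\alpha^3$ terms of $\det\mathcal{Q}$ cancel), and you are right that ``friendship graph'' in the statement should read firefly-type graph.
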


\noindent{\textbf{Proof.}} Let $F_{p, n-p}$ be the firefly-type graph with the vertex set partitioned as $V(F_{p, n-p}) = \{v\} \cup \{w_1, w_2, \ldots, w_p\} \cup \{v_1, v_2, \ldots, v_{2(n-p)}\}$, where, $v$ is the  central vertex. Vertices $w_1, \ldots, w_p$ are the pendant vertices connected directly to $v$. The remaining $2(n-p)$ vertices form $(n-p)$ disjoint triangles, each sharing the central vertex $v$. The transmission degrees of each vertex are
\[Tr(v) = 2n - p, \, Tr(w_i) = 4n - 2p - 1, \, \text{and} \, Tr(v_j) = 4n - 2p - 2, \, \text{where} \, 1 \le j \le 2(n-p),  1 \le i \le p. \]
Then the generalised distance matrix of the firefly-type graph $F_{p,n-p}$ is given by
    \[
D_\alpha(F_{p,n-p}) = 
\left(
\begin{array}{c|cccc|ccccccc}
g\alpha & \beta & \beta & \cdots & \beta & \beta & \beta & \beta & \beta & \cdots & \beta  & \beta \\ \hline
\beta & g_1\alpha & 2\beta & \cdots & 2\beta & 2\beta & 2\beta & 2\beta & 2\beta & \cdots & 2\beta & 2\beta \\
\beta & 2\beta & g_1\alpha & \cdots & 2\beta & 2\beta & 2\beta & 2\beta & 2\beta & \cdots & 2\beta & 2\beta \\
\vdots & \vdots & \vdots & \ddots & \vdots & \vdots & \vdots & \vdots & \vdots & \ddots & \vdots & \vdots \\
\beta & 2\beta & 2\beta & \cdots & g_1 \alpha & 2\beta & 2\beta & 2\beta & 2\beta & \ddots & 2\beta & 2\beta \\ \hline
\beta & 2\beta & 2\beta & \cdots & 2\beta & g_2\alpha & \beta & 2\beta & 2\beta & \cdots & 2\beta & 2\beta\\
\beta & 2\beta & 2\beta & \cdots & 2\beta & \beta & g_2\alpha & 2\beta & 2\beta & \cdots & 2\beta & 2\beta \\
\beta & 2\beta & 2\beta & \cdots & 2\beta & 2\beta & 2\beta & g_2\alpha & \beta & \cdots & 2\beta & 2\beta \\
\beta & 2\beta & 2\beta & \cdots & 2\beta & 2\beta & 2\beta & \beta & g_2\alpha & \cdots & 2\beta & 2\beta \\
\vdots & \vdots & \vdots & \ddots & \vdots & \vdots & \vdots & \vdots & \vdots & \ddots & \vdots & \vdots\\
\beta & 2\beta & 2\beta & \cdots & 2\beta & 2\beta & 2\beta & 2\beta & 2\beta & \ddots & g_2 \alpha & \beta\\
\beta & 2\beta & 2\beta & \cdots & 2\beta & 2\beta & 2\beta & 2\beta & 2\beta & \ddots & \beta & g_2 \alpha
\end{array}
\right)
,\] where $\beta=1-\alpha$, $g=2n-p$, $g_1=4n-2p-1$, and $g_2=4n-2p-2$.

Since $D_{\alpha}(F_{p,n-p})$ is a symmetric matrix, its diagonal block matrices are also symmetric. The second and third diagonal blocks have constant row sums, implying that these constants are eigenvalues of the respective blocks corresponding to the eigenvector $\mathbf{1} = (1,1,\ldots,1)^T$.
The remaining eigenvectors are orthogonal to $\mathbf{1}$, meaning that their coordinate sums are zero. To construct such eigenvectors, consider the vectors 
\[
Z_i = e_i - e_{i+1}, \ 2 \le i \le p,
\]
where \( e_i \) denotes the standard basis vector having \( 1 \) in the \( i^{\text{th}} \) position and zeros elsewhere. 
Then,
\[
D_{\alpha}(F_{p,n-p})Z_i = \big((4n - 2p + 1)\alpha - 2\big)Z_i.
\]
Hence, $(4n - 2p + 1)\alpha - 2$ is an eigenvalue of $D_{\alpha}(F_{p,n-p})$ with multiplicity $p - 1$.

Next, consider the vectors 
\[
Y_i = e_{p+2i} - e_{p+2i+1}, \ 1 \leq i \leq n-p.
\]
Then,
\[
D_{\alpha}(F_{p,n-p})Y_i = \big((4n - 2p - 1)\alpha - 1\big)Y_i.
\]
Hence, $(4n - 2p - 1)\alpha - 1$ is an eigenvalue of $D_{\alpha}(F_{p,n-p})$ with multiplicity $n - p$.

Now, consider the vectors 
\[
X_i = -e_{p+2i} - e_{p+2i+1} + e_{p+2i+2} + e_{p+2i+3}, \ 1 \leq i \leq n-p-1.
\]
Then,
\[
D_{\alpha}(F_{p,n-p})X_i = \big((4n - 2p + 1)\alpha - 3\big)X_i.
\]
Hence, $(4n - 2p + 1)\alpha - 3$ is an eigenvalue of $D_{\alpha}(F_{p,n-p})$ with multiplicity $n - p - 1$.

The remaining three eigenvalues of $D_{\alpha}(F_{p,n-p})$ can be obtained from the characteristic polynomial $p(x)$ corresponding to the equitable quotient matrix, given in the statement of the theorem

$$\begin{pmatrix}
    (2n-p)\alpha & p(1-\alpha) & 2(n-p)(1-\alpha)\\
    1-\alpha & (4n-4p+1)\alpha+2(p-1) & 4(n-p)(1-\alpha)\\
    1-\alpha & 2p(1-\alpha) & (2p+1)\alpha+4n-4p-3
\end{pmatrix}.$$ 
\hfill$\blacksquare$

By setting $\alpha = 0$, we obtain $D_0(G)=D(G)$. Applying Theorem~\ref{t2} in this special case yields the following result describing the distance spectrum of the firefly-type graph.

\begin{cor}
The distance spectrum of the firefly-type graph is
\[
\operatorname{Spec}_{D}(F_{n,p})
=
\left\{
(-2)^{\,p-1},
(-1)^{\,n-p},
(-3)^{\,n-p-1},
\lambda_1,\lambda_2,\lambda_3
\right\},
\]
where \(\lambda_1,\lambda_2,\lambda_3\) are the roots of polynomial $ p(x)=x^3-(4n-2p-5)x^2-(10n-3p-6)x-(4n-p).$
\end{cor}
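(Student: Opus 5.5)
The plan is to specialise Theorem \ref{t2} at $\alpha=0$, using $D_0(F_{p,n-p})=D(F_{p,n-p})$, and then check the resulting cubic directly against the quotient matrix. The main obstacle is not conceptual: the $\alpha$-dependent cubic in Theorem \ref{t2} is involved enough that the cross-check is where the real verification lies.

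First, the three explicit families of eigenvalues. Put $\alpha=0$ in $(4n-2p+1)\alpha-2$, $(4n-2p-1)\alpha-1$ and $(4n-2p+1)\alpha-3$. This gives $-2$, $-1$ and $-3$, with multiplicities $p-1$, $n-p$ and $n-p-1$ respectively. These can be confirmed directly: at $\alpha=0$, apply the distance matrix to the vectors $Z_i$, $Y_i$ and $X_i$ from the proof of Theorem \ref{t2}. Each check uses only the facts that two pendant vertices are at distance $2$, two vertices of the same triangle are at distance $1$, and any two vertices in different parts are at distance $2$.

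Next, the cubic. Setting $\alpha=0$ in $p(x)$ kills every $\alpha$ and $\alpha^2$ term, leaving
\[
-x^3+(4n-2p-5)x^2+(10n-3p-6)x+(4n-p).
\]
Multiplying by $-1$ does not change the roots, so $\lambda_1,\lambda_2,\lambda_3$ are the roots of the monic polynomial in the statement.

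For the cross-check, I will compute the characteristic polynomial of the equitable quotient matrix at $\alpha=0$,
\[
Q_0=\begin{pmatrix} 0 & p & 2(n-p)\\ 1 & 2(p-1) & 4(n-p)\\ 1 & 2p & 4n-4p-3\end{pmatrix},
\]
and compare it with the cubic above via its three invariants.
\begin{itemize}
\item The trace is $4n-2p-5$.
\item The sum of the principal $2\times 2$ minors is $-p-2(n-p)+(2p-8n+6)=3p-10n+6$.
\item Expanding the determinant along the first row gives $3p+4(n-p)=4n-p$.
\end{itemize}
Hence $\det(xI-Q_0)=x^3-(4n-2p-5)x^2-(10n-3p-6)x-(4n-p)$, in agreement with the specialisation. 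Since the partition is equitable, Proposition 2.1 (Brouwer and Haemers) shows that these three roots are eigenvalues of $D(F_{p,n-p})$.

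Finally, the count: $(p-1)+(n-p)+(n-p-1)+3=2n-p+1$, which equals $|V(F_{p,n-p})|$, so the list is the whole distance spectrum.
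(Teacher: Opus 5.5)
Your proposal is correct and follows the paper's own argument. You specialise Theorem~\ref{t2} at $\alpha=0$ to get $-2,-1,-3$ with multiplicities $p-1$, $n-p$, $n-p-1$, and take the remaining three eigenvalues as the roots of the characteristic polynomial of the same equitable quotient matrix $Q_0$; your trace, principal-minor and determinant computation confirms this polynomial explicitly. The one step to tighten is the final count. Counting eigenvalues alone tacitly assumes the cubic's roots are simple and differ from $-1,-2,-3$. Instead, observe that the $Z_i,Y_i,X_i$ span the vectors summing to zero on each cell, which is the $D$-invariant orthogonal complement of the cell-constant vectors; the spectrum is then exactly the union of these eigenvalues with $\sigma(Q_0)$.
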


\noindent\textbf{Proof.}  
Substituting $\alpha = 0$ in Theorem \ref{t2} yields the eigenvalues $-2$, $-1$, and $-3$ with respective multiplicities $p - 1$, $n - p$, and $n - p - 1$. The remaining three eigenvalues $\lambda_1$, $\lambda_1,$ and $\lambda_3$ are the roots of the characteristic polynomial $p(x)$ of the following equitable quotient matrix is
\[
\begin{pmatrix}
    0 & p & 2(n-p)\\
    1 & 2(p-1) & 4(n-p)\\
    1 & 2p & 4n - 4p - 3
\end{pmatrix}.
\]
\hfill$\blacksquare$

By setting $\alpha = \tfrac{1}{2}$, we obtain $2D_{\frac{1}{2}}(G)=D^{Q}(G)$. Using Theorem \ref{t2}, we derive the following result for the distance signless Laplacian spectrum of the firefly-type graph.

\begin{cor}
The distance signless Laplacian spectrum of the firefly-type graph is
\[
\operatorname{Spec}_{Q_D}(F_{n,p})
=
\left\{
(4n-2p-3)^{\,n-1},
(4n-2p-5)^{\,n-p-1},
\lambda_1,\lambda_2,\lambda_3
\right\}, \, \text{where} \,
\]
 \(\lambda_1,\lambda_2,\lambda_3\) are the roots of $p(x)=x^3-\left(14n-7p-8\right)x^2+\left(56n^2+14p^2-62n+27p-56np+15\right)x-64n^3+8p^3+96n^2+20p^2+96n^p-48np^2-88np+16p.$
\end{cor}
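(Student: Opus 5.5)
The plan is to specialise Theorem \ref{t2} at $\alpha=\tfrac12$ and then use $D^{Q}(F_{p,n-p})=2D_{\frac12}(F_{p,n-p})$. Under this relation every eigenvalue $\mu$ of $D_{\frac12}$ becomes the eigenvalue $2\mu$ of $D^{Q}$, with the same multiplicity and the same eigenvectors. So no new eigenvector construction is needed: the vectors $Z_i$, $Y_i$, $X_i$ and the equitable partition $\{v\}\cup\{w_i\}\cup\{v_j\}$ from the proof of Theorem \ref{t2} carry over unchanged.

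First, I handle the explicit eigenvalues. Substituting $\alpha=\tfrac12$ and doubling gives
\[
2\Big(\tfrac{4n-2p+1}{2}-2\Big)=4n-2p-3 \quad\text{(from the $Z_i$, multiplicity $p-1$)},
\]
\[
2\Big(\tfrac{4n-2p-1}{2}-1\Big)=4n-2p-3 \quad\text{(from the $Y_i$, multiplicity $n-p$)}.
\]
These two values coincide, so together they give $4n-2p-3$ with multiplicity $(p-1)+(n-p)=n-1$. This merging explains why the statement lists only two explicit eigenvalues. The $X_i$ contribute $2\big(\tfrac{4n-2p+1}{2}-3\big)=4n-2p-5$ with multiplicity $n-p-1$. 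The total count is $(n-1)+(n-p-1)+3=2n-p+1=|V(F_{p,n-p})|$, as it should be.

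Second, for the remaining three eigenvalues, I take the equitable quotient matrix from the proof of Theorem \ref{t2}, set $\alpha=\tfrac12$, and multiply by $2$. This gives
\[
\mathcal{Q}=\begin{pmatrix} 2n-p & p & 2(n-p)\\ 1 & 4n-3 & 4(n-p)\\ 1 & 2p & 8n-6p-5\end{pmatrix}.
\]
By the Brouwer--Haemers proposition, the eigenvalues of $\mathcal{Q}$ are eigenvalues of $D^{Q}(F_{p,n-p})$. By the count above, they are exactly the three remaining ones. I then compute $\det(xI-\mathcal{Q})$ directly.

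The $x^2$ coefficient is minus the trace, namely $-(14n-7p-8)$, which agrees with the statement. The $x$ coefficient is the sum of the principal $2\times2$ minors of $\mathcal{Q}$. The constant term is $-\det\mathcal{Q}$, which I will expand along the first column, since its lower entries are both $1$.

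The main obstacle is this last algebraic step: confirming the linear and constant coefficients against the stated polynomial. The constant term as printed appears garbled (for instance the term ``$96n^p$''), so I will recompute it from $\mathcal{Q}$ rather than trust it. As a consistency check, I will also compare the result with $-8\,p(x/2)$ evaluated at $\alpha=\tfrac12$, where $p$ is the cubic of Theorem \ref{t2}; if the two computations disagree, the one coming directly from $\mathcal{Q}$ is the one to use.
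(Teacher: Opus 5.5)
Your route is the one the paper intends: it gives no separate proof and presents the corollary as Theorem \ref{t2} at $\alpha=\tfrac12$ with $D^{Q}=2D_{\frac12}$. Everything you do before the cubic is correct:
\begin{itemize}
\item the $Z_i$- and $Y_i$-eigenvalues both double to $4n-2p-3$, giving multiplicity $(p-1)+(n-p)=n-1$;
\item the $X_i$ give $4n-2p-5$ with multiplicity $n-p-1$;
\item your matrix $\mathcal Q$ is right, and $\operatorname{tr}\mathcal Q=14n-7p-8$.
\end{itemize}

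The gap is that you defer exactly the step that decides the statement, and that step fails. The principal $2\times 2$ minors of $\mathcal Q$ are $8n^2-4np-6n+2p$, $16n^2-20np+6p^2-12n+7p$ and $32n^2-32np+8p^2-44n+18p+15$. Their sum, $56n^2-56np+14p^2-62n+27p+15$, matches the printed linear coefficient. The determinant, however, is
\[
\det\mathcal Q=64n^3-96n^2p+48np^2-8p^3-96n^2+88np-20p^2+36n-16p .
\]
So the constant term of $\det(xI-\mathcal Q)$ is $-64n^3+96n^2p-48np^2+8p^3+96n^2-88np+20p^2-36n+16p$. Even reading the garbled ``$96n^p$'' as $96n^2p$, the printed constant term is missing $-36n$.

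Your proposed cross-check gives the same verdict. At $\alpha=\tfrac12$, $-8\,p(x/2)$ built from the cubic of Theorem \ref{t2} reproduces $\det(xI-\mathcal Q)$ coefficient by coefficient, including the $-36n$. So the printed cubic is not even the specialisation of Theorem \ref{t2}.

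For a concrete instance take $n=2$, $p=1$, i.e.\ the four-vertex graph $F_{1,1}$. Here
\[
\mathcal Q=\begin{pmatrix}3&1&2\\1&5&4\\1&2&5\end{pmatrix}.
\]
Its characteristic polynomial is $x^3-13x^2+44x-44$, and directly $\det D^{Q}(F_{1,1})=132=3\cdot 44$. The printed formula instead gives constant term $+28$.

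The statement as printed therefore cannot be proved. What your argument establishes is the corrected version with the constant term above, and you should state that conclusion rather than leave it as an open ``consistency check''.

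One smaller point: ``by the count above'' should be replaced by an invariant-subspace argument.
\begin{itemize}
\item The $Z_i,Y_i,X_i$ are $2n-p-2$ linearly independent vectors with zero sum on each cell.
\item They therefore span the orthogonal complement of the $3$-dimensional space of cell-constant vectors.
\item That space is $D^{Q}$-invariant because the partition is equitable, and $D^Q$ acts on it via $\mathcal Q$.
\end{itemize}
Hence the remaining three eigenvalues, counted with multiplicity, are exactly those of $\mathcal Q$, even if some coincide with the explicit ones.
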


The following result gives the generalised distance spectrum of the torch graph $(O_n)$.

\begin{thm}\label{t3} 
The distance spectrum of the torch graph $(O_n)$ is
\[
\operatorname{Spec}_{D_\alpha}(O_n)
=
\left\{
((2n+11)\alpha-2)^{\,n-4},
(3n+3)\alpha-2, (n+9)\alpha-1,
\lambda_1,\ldots,\lambda_6
\right\},
\]
where \(\lambda_1,\ldots,\lambda_6\) are the eigenvalues of the following equitable quotient matrix 
    $$ M=\begin{pmatrix}
         (2n+1)\alpha & 2\gamma & 2\beta & \beta & 2\beta & 2\beta\\
         2\beta & 17\alpha+2n-8 & 4\beta & 3\beta & 2\beta & 6\beta\\
         2\beta & 4\gamma & (4n+1)\alpha & \beta & 6\beta & 4\beta\\
         \beta & 3\gamma & \beta & (3n-1)\alpha & 4\beta & 2\beta\\
         \beta & \gamma & 3\beta & 2\beta & (n+7)\alpha+1 & 4\beta\\
         \beta & 3\gamma & 2\beta & \beta & 4\beta & (3n-1)\alpha+2
     \end{pmatrix},$$ where $\gamma=(1-\alpha)(n-3)$ and $\beta=1-\alpha$.
\end{thm}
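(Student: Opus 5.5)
The plan is to use the twin structure of $O_n$: a few families of vertices with identical distance profiles give explicit eigenvectors, and an equitable partition handles what remains. I read the structure from Definition \ref{d3} and Figure 4. The vertices $v_2,\ldots,v_{n-2}$ are the $n-3$ pages of the book $B_n$, each adjacent exactly to the spine vertices $v_{n+1},v_{n+3}$. The spine $v_{n+1}v_{n+3}$ is also joined to $v_1$. The fan $f_3$ has centre $v_n$ over the path $v_{n+4}\,v_1\,v_{n+2}$, and $v_{n-1}$ is a pendant vertex at $v_n$. This reading is the one for which the statement's eigenvalues come out as shown below; the first step is to confirm it by checking $Tr(v_{n+1})=n+8$ and $Tr(v_{n+2})=3n+1$ directly. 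With this reading I will write down the distance row of every vertex (diameter is $4$, attained between a page and $v_{n-1}$) and compute all transmissions. For example, a page $v_i$ is at distance $1$ from the spine, $2$ from $v_1$ and the other $n-4$ pages, $3$ from $v_n,v_{n+2},v_{n+4}$, and $4$ from $v_{n-1}$, so $Tr(v_i)=2n+9$.

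Next I will extract the explicit eigenvalues from twin vertices. If $u,w$ are twins (same distances to every other vertex) with $d(u,w)=\delta$ and common transmission $T$, then $e_u-e_w$ is an eigenvector of $D_\alpha(O_n)$ with eigenvalue $\alpha T-\delta(1-\alpha)$.
\begin{itemize}
\item The pages $v_2,\ldots,v_{n-2}$ are pairwise twins with $\delta=2$ and $T=2n+9$. The vectors $e_{v_2}-e_{v_j}$, $3\le j\le n-2$, give $(2n+11)\alpha-2$ with multiplicity $n-4$.
\item The spine vertices $v_{n+1},v_{n+3}$ are adjacent twins with $T=n+8$, giving $(n+9)\alpha-1$.
\item The fan vertices $v_{n+2},v_{n+4}$ are non-adjacent twins with $T=3n+1$, giving $(3n+3)\alpha-2$.
\end{itemize}
In each case I will verify this by a one-line row computation, as was done with $Z_i,Y_i,X_i$ in Theorem \ref{t2}.

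The remaining $(n+4)-(n-4)-2=6$ eigenvalues come from the partition into the six cells: $\{v_1\}$, the pages, the spine pair, the fan pair, $\{v_n\}$ and $\{v_{n-1}\}$ (ordered to match the rows of $M$). This partition is equitable for $D_\alpha(O_n)$ because twins have identical rows up to the swap of the twin pair. By the proposition of Brouwer and Haemers, every eigenvalue of its quotient matrix is an eigenvalue of $D_\alpha(O_n)$.

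To see that the multiset is exactly the one stated, I will argue as follows.
\begin{itemize}
\item The twin-difference vectors are constant-sum-zero on cells, hence orthogonal to the $6$-dimensional space of cell-constant vectors.
\item That space is invariant under the symmetric matrix $D_\alpha(O_n)$, so its orthogonal complement is also invariant.
\item The complement has dimension $n-2$, and the $n-2$ twin-difference vectors above are linearly independent and lie in it, so they span it.
\item The spectrum therefore splits as the listed explicit eigenvalues together with the six eigenvalues of the quotient matrix.
\end{itemize}

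The main obstacle is bookkeeping: computing each quotient entry as $\alpha\,Tr$ plus $(1-\alpha)$ times the sum of distances into each cell. The diagonal entries mix transmission with internal distances; for instance, the pages row has $(2n+9)\alpha+2(n-4)(1-\alpha)=17\alpha+2n-8$. The $\gamma=(n-3)(1-\alpha)$ entries come from the page cell, which has $n-3$ vertices. I will check each row sum against the corresponding transmission as a consistency test.
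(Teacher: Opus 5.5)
Your proposal is correct and follows essentially the paper's route. The paper also takes page differences for $(2n+11)\alpha-2$, but it then obtains $(n+9)\alpha-1$ and $(3n+3)\alpha-2$ as twin-difference eigenvectors of an intermediate $8$-cell quotient before collapsing to the $6$-cell matrix $M$, whereas you handle all three twin classes at once on $D_\alpha(O_n)$ and add the orthogonal-complement dimension count. That count, with the $n-2$ twin vectors spanning the complement of the cell-constant space, is what actually guarantees the multiset is exactly as stated; the paper leaves this implicit.
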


\noindent{\textbf{Proof.}}
Using Definition \ref{d3}, the vertex set of the torch graph is given by $V(O_n)=\{v_i \,; 1 \le i \leq n+4 \}$. The transmission degree of $\operatorname{Tr}(v_1) = 2n + 1$, $\operatorname{Tr}(v_i) = 2n + 9$ for $2 \le i \le n - 2$, $\operatorname{Tr}(v_{n-1}) = 4n + 1$, $\operatorname{Tr}(v_n) = 3n - 1$, $\operatorname{Tr}(v_j) = n + 8$ for $j = \{n+1,n+3\}$, and $\operatorname{Tr}(v_k) = 3n + 1$ for $k = \{n+2,n+4\}$. By assigning an appropriate ordering to the vertices, the generalised distance matrix of the torch graph $(O_n)$ is given as
$$D_{\alpha}(O_n)=\left(\begin{array}{c|c|c|c|c|c|c|c}
(2n+1)\alpha & 2\beta J & 2\beta & \beta & \beta & \beta & \beta & \beta\\  \hline
2\beta J^t & A_{n-3} & 4\beta J & 3\beta J & \beta J & 3\beta J & \beta J & 3\beta J\\ \hline
2\beta & 4\beta J^t & (4n+1)\alpha & \beta & 3\beta & 2\beta & 3\beta & 2\beta\\ \hline
\beta &  3\beta J^t & \beta & (3n-1)\alpha & 2\beta & \beta & 2\beta & \beta\\ \hline
\beta & \beta J^t & 3\beta & 2\beta & \gamma & 2\beta & \beta & 2\beta \\ \hline
\beta & 3\beta J^t & 2\beta & \beta & 2\beta & \gamma_1 & 2\beta & 2\beta\\ \hline
\beta & \beta J^t & 3\beta & 2\beta & \beta & 2\beta & \gamma & 2\beta\\ \hline
\beta & 3\beta J^t & 2\beta & \beta & 2\beta & 2\beta & 2\beta & \gamma_1
\end{array}\right),$$ where $\gamma=(n+8)\alpha$, $\gamma_1=(3n+1)\alpha$,  $\beta=1-\alpha$, $A_{n-3}=(2n+9)\alpha I_{n-3}+2(1-\alpha)(J-I)_{n-3}$, and $J$ denotes the matrix whose all entries are equal to $1$.

Since all row sums of $A_{n-3}$ are equal to $17\alpha+2n-8$, this quantity is an eigenvalue of $A_{n-3}$ corresponding to the eigenvector $\mathbf{1}=(1,1,\ldots,1)^t$. Moreover, every eigenvector associated with any other eigenvalue of $A_{n-3}$ is orthogonal to $\mathbf{1}$. Let $X_i =(x_{i_1},x_{i_2},\ldots,x_{i_{n-3}})$, for $2 \le i \le n-3$ such that $\mathbf{1}^t X_i=0$. This shows the sum of the component of $X_i$ is zero. These eigenvectors can be naturally extended to eigenvectors of $D_\alpha(O_n)$. In particular, for the vector $Y_i=e_{i+1}-e_{i+2}$, for $1 \leq i \leq n-4$ such that 
$$D_\alpha(O_n)Y_i=[(2n+11)\alpha-2]Y_i, \ \text{for} \ 1 \leq i \leq n-4.$$ Thus we get eigenvalue $(2n+11)\alpha-2$ with multiplicity $n-4$. The rest of the eigenvalues can be found with the help of following equitable quotient matrix 
$$M_1=\begin{pmatrix}
(2n+1)\alpha & 2\gamma & 2\beta & \beta & \beta & \beta & \beta & \beta \\
2\beta & \gamma_1 & 4\beta & 3\beta & \beta & 3\beta & \beta & 3\beta \\
2\beta & 4\gamma & (4n+1)\alpha & \beta & 3\beta & 2\beta & 3\beta & 2\beta\\
\beta & 3\gamma & \beta & (3n-1)\alpha & 2\beta & \beta & 2\beta & \beta\\
\beta & \gamma & 3\beta & 2\beta & \gamma &  2\beta & \beta & 2\beta \\
\beta & 3\gamma & 2\beta & \beta & 2\beta & \gamma_1 & 2\beta & 2\beta \\
\beta & \gamma & 3\beta & 2\beta & \beta & 2\beta & \gamma & 2\beta \\
\beta & 3\gamma & 2\beta & \beta & 2\beta & 2\beta & 2\beta & \gamma_1
\end{pmatrix},$$ where $\gamma_2=(1 - \alpha)(n-3)$, and $\gamma_3 = 17\alpha+2n-8$.
By defining the vector $X_1=(0,0,0,0,1,0,-1,0)^t$ and $X_2=(0,0,0,0,0,1,0,-1)^t$, we observe that
$$M_1X_1=[(n+9)\alpha - 1]X_1 \ \text{and} \ M_1X_2=[(3n+3)\alpha - 2]X_2.$$
which gives two additional eigenvalues of $D_\alpha(O_n)$. The remaining eigenvalues can be obtained from the equitable quotient matrix $M$ defined earlier in the theorem. This completes the proof.
\hfill$\blacksquare$

For different values of $\alpha$ in Theorem \ref{t3}, we get different spectra of the torch graph. Some of them are given as follows.

\begin{cor}
The distance spectrum of the torch graph is
\[
\operatorname{Spec}_{D}(O_n)
=
\left\{
(-2)^{\,n-3},
-1,
\lambda_1,\ldots,\lambda_6
\right\},
\]
where \(\lambda_1,\ldots,\lambda_6\) are the roots of
$p(x)=x^6-(2n-5)x^5+(43n-63)x^4-(123n-103)x^3-(76n+174)x^2+(10n-276)x+12n-88.$
\end{cor}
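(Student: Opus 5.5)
The plan is to specialise Theorem \ref{t3} at $\alpha=0$, where $D_0(O_n)=D(O_n)$, and then recompute the remaining eigenvalues directly from the distance matrix.

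\textbf{Step 1: the simple eigenvalues.} Setting $\alpha=0$ in Theorem \ref{t3} gives the following.
\begin{itemize}
\item The family $Y_i=e_{i+1}-e_{i+2}$, $1\le i\le n-4$, gives eigenvalue $(2n+11)\alpha-2=-2$ with multiplicity $n-4$.
\item The vector $X_2$ gives $(3n+3)\alpha-2=-2$ once more.
\item The vector $X_1$ gives $(n+9)\alpha-1=-1$.
\end{itemize}
This already accounts for $(-2)^{n-3}$ and $-1$. Since these eigenvectors are explicit, I would recheck each one directly against $D(O_n)$: for $Y_i$ the relevant entries are $0$ on the diagonal and $2$ between distinct $v_i$, giving $-2$. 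For $X_1,X_2$, which lift to $e_{v_{n+1}}-e_{v_{n+3}}$ and $e_{v_{n+2}}-e_{v_{n+4}}$, I would use the symmetry of $O_n$ swapping $v_{n+1}\leftrightarrow v_{n+3}$ and $v_{n+2}\leftrightarrow v_{n+4}$. The eigenvalue is then $d(x,x)-d(x,x')$ plus cross terms. For $v_{n+1},v_{n+3}$ (adjacent) this gives $-1$. For $v_{n+2},v_{n+4}$ it gives $-2$, but only after confirming that the cross terms between the two swapped pairs cancel.

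\textbf{Step 2: the six remaining eigenvalues.} These are the eigenvalues of the $6\times 6$ equitable quotient matrix $M$ at $\alpha=0$, so that $\beta=1$ and $\gamma=n-3$. By the Brouwer--Haemers proposition they are eigenvalues of $D(O_n)$. I would compute $\det(xI-M|_{\alpha=0})$ by cofactor expansion, treating $n$ as a symbolic parameter. To guard against errors in the displayed $M$, I would rebuild the quotient matrix from scratch using the partition
\[
\{v_1\},\ \{v_2,\dots,v_{n-2}\},\ \{v_{n-1}\},\ \{v_n\},\ \{v_{n+1},v_{n+3}\},\ \{v_{n+2},v_{n+4}\},
\]
with entries equal to the sums of distances into each cell. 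I would then check that the counts add up: $(n-3)+1+6=n+4$ eigenvalues in total, and the trace of $D$ is zero, so the six roots must sum to $-(-2(n-3)-1)=2n-5$. This matches the $x^5$ coefficient $-(2n-5)$ in the stated $p(x)$, which is a useful consistency check.

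\textbf{Main obstacle.} The symbolic expansion of the $6\times6$ determinant is the hard part, together with any inconsistency between the distances used in Theorem \ref{t3} and the actual graph of Definition \ref{d3}. I would first verify the transmissions in a small case such as $n=5$, and validate the coefficients of $p(x)$ at two numerical values of $n$ before trusting the symbolic result.
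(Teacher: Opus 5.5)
Your route is the same as the paper's: specialise Theorem~\ref{t3} at $\alpha=0$, read off $(-2)^{n-3}$ and $-1$ from $Y_i$, $X_1$, $X_2$, and take the remaining six eigenvalues from the quotient matrix of the partition $\{v_1\},\{v_2,\dots,v_{n-2}\},\{v_{n-1}\},\{v_n\},\{v_{n+1},v_{n+3}\},\{v_{n+2},v_{n+4}\}$. Step~1 is fine, and simpler than you fear. $v_{n+1},v_{n+3}$ are adjacent twins, and $v_{n+2},v_{n+4}$ are non-adjacent twins with neighbourhood $\{v_1,v_n\}$. So each transposition alone is an automorphism and there are no cross terms to cancel. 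The $\alpha=0$ quotient matrix printed in the paper is also correct. The gap is the last step: $\det(xI-M)$ is \emph{not} the displayed $p(x)$, so the argument cannot close.

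The $x^4$ coefficient of $\det(xI-M)$ is the sum of the principal $2\times2$ minors of $M$. That sum equals $63-43n$, whereas $p(x)$ has $+(43n-63)$. The second-moment analogue of your trace check confirms this:
\[
\operatorname{tr}(D^2)=\sum_{u,v}d(u,v)^2=4n^2+70n-112 .
\]
The known eigenvalues contribute $4(n-3)+1$, so $\sum\lambda_i^2=4n^2+66n-101$. Hence $e_2(\lambda)=\tfrac12\bigl((2n-5)^2-(4n^2+66n-101)\bigr)=63-43n$.

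With the stated $p(x)$ one would instead get $\sum\lambda_i^2=(2n-5)^2-2(43n-63)=4n^2-106n+151$. This is negative for every $n$ with $4\le n\le 24$, which is impossible because the $\lambda_i$ are eigenvalues of the real symmetric matrix $D(O_n)$. Since $43n-63\neq 63-43n$ for every integer $n$, the statement as written is false for all $n$.

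Carrying out your expansion gives
\[
x^6-(2n-5)x^5-(43n-63)x^4-(123n-103)x^3-(76n+174)x^2+(10n-276)x+12n-88 .
\]
This is the stated polynomial with only the sign of the $x^4$ term reversed; the other coefficients agree. Your planned numerical validation at two values of $n$ would detect the discrepancy. At that point, though, the plan refutes the corollary rather than proving it. What can actually be proved is the version with the corrected $x^4$ coefficient.
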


\noindent{\textbf{Proof.}}
For $\alpha=0$, we have $D_0(G)=D(G)$. By Theorem \ref{t3}, the distance spectrum of the torch graph consists of the eigenvalues $-2$ and $-1$ with multiplicities $n-3$ and $1$. The remaining $6$ eigenvalues are the roots of the characteristic polynomial $p(x)$ of the following equitable quotient matrix

\[
M = \begin{pmatrix}
0 & 2n-6 & 2 & 1 & 2 & 2 \\
2 & 2n - 8 & 4 & 3 & 2 & 6 \\
2 & 4n-12 & 0 & 1 & 6 & 4 \\
1 & 3n-9 & 1 & 0 & 4 & 2 \\
1 & n-3 & 3 & 2 & 1 & 4 \\
1 & 3n-9 & 2 & 1 & 4 & 2
\end{pmatrix}.
\]
\hfill$\blacksquare$

\begin{cor}
The distance signless Laplacian spectrum of the torch graph is
\[
\operatorname{Spec}_{Q_D}(O_n)
=
\left\{
(2n+7)^{\,n-4},
\,3n-1,
\,n+7,
\,\lambda_1,\ldots,\lambda_6
\right\},
\]
where \(\lambda_1,\ldots,\lambda_6\) are the roots of polynomial
$p(x)=x^6 - (17n + 14)x^5 + (117n^2 + 164n + 144)x^4 - (415n^3 + 794n^2 - 1226n - 546)x^3+ (794n^4 + 2070n^3 + 3083n^2 + 6168n - 2153)x^2- (768n^5 + 2974n^4 + 1375n^3 + 21150n^2 - 12299n + 920)x + 288n^6 + 1848n^5 - 2140n^4 + 22252n^3 - 16116n^2 + 988n + 752$. 
\end{cor}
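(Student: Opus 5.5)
The plan is to obtain this corollary from Theorem \ref{t3} by setting $\alpha=\tfrac12$ and using $D^Q(O_n)=2D_{\frac12}(O_n)$. Every eigenvalue of $D^Q(O_n)$ is then twice an eigenvalue of $D_{\frac12}(O_n)$, with the same multiplicity.

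First, specialise the explicit eigenvalues. The eigenvalue $(2n+11)\alpha-2$, of multiplicity $n-4$, becomes $\tfrac{2n+11}{2}-2$ at $\alpha=\tfrac12$, and doubling gives $2n+7$. Similarly $(3n+3)\alpha-2$ becomes $3n-1$, and $(n+9)\alpha-1$ becomes $n+7$. Each of these should be checked independently rather than just substituted. For $2n+7$ I will apply $D^Q(O_n)$ directly to $Y_i=e_{i+1}-e_{i+2}$, where $v_{i+1},v_{i+2}$ are vertices of the book block with common transmission. For $3n-1$ and $n+7$ I will lift $X_2$ and $X_1$ to vectors supported on the pairs $\{v_{n+2},v_{n+4}\}$ and $\{v_{n+1},v_{n+3}\}$. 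These pairs are swapped by the reflection symmetry of $O_n$, so the antisymmetric vectors $e_{n+2}-e_{n+4}$ and $e_{n+1}-e_{n+3}$ are eigenvectors with eigenvalue $Tr(v)+d(v,v)-d(v,v')$ for $v,v'$ in the pair.

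Second, the remaining six eigenvalues are those of $2M$ at $\alpha=\tfrac12$, i.e. of the $6\times 6$ quotient matrix with $\beta=\tfrac12$ and $\gamma=\tfrac{n-3}{2}$ doubled. I will form this integer matrix, whose off-diagonal entries become $2\cdot$coefficient$\cdot\tfrac12$ and whose diagonal entries become twice the specialised diagonal entries. I will then expand $\det(xI-2M)$, for instance by cofactor expansion along the row with the most constant entries, or by symbolic computation. The coefficients of $x^5$ and $x^4$ can be cross-checked against the trace and the sum of principal $2\times2$ minors.

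The main obstacle is that the printed data of Theorem \ref{t3} are not internally consistent. The transmissions listed in its proof, the quotient matrix $M$, and the block sizes ($n-3$ versus multiplicity $n-4$) do not obviously agree. I would therefore recompute the transmissions of $O_n$ directly from Definition \ref{d3}, fix an equitable partition, and confirm that the multiplicities add up to $n+4$. The count is $(n-4)+1+1+6=n+4$, which is consistent. I would then recompute the sextic from the corrected quotient matrix before comparing it with the stated polynomial, and reconcile the final degree-six polynomial against the trace identity $\operatorname{tr}D^Q=2\sum Tr(v)$ as a sanity check.
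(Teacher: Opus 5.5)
Your route is the paper's own: specialise Theorem~\ref{t3} at $\alpha=\tfrac{1}{2}$ and double. The explicit eigenvalues are handled correctly. The vertices $v_2,\dots,v_{n-2}$ are pairwise twins, $v_{n+1},v_{n+3}$ are adjacent twins, and $v_{n+2},v_{n+4}$ are non-adjacent twins. It is this twin property, not the global reflection, that makes each antisymmetric vector an eigenvector. This gives $2n+7$ with multiplicity $n-4$, together with $n+7$ and $3n-1$.

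The gap is your last step, which you leave as ``compare and reconcile''. It cannot succeed, because the stated sextic is not the characteristic polynomial of the correct quotient matrix. You also misplace the inconsistency. Take the graph of Figure~4, where $v_1$ is the middle vertex of the $P_3$ and is adjacent to both $v_{n+2}$ and $v_{n+4}$. (Reading $P_3=v_1v_{n+2}v_{n+4}$ in Definition~\ref{d3} as a path in that order gives a different graph, in which $v_{n+2},v_{n+4}$ are not twins.) For this graph, the transmissions in the proof of Theorem~\ref{t3} and its $6\times 6$ matrix $M$ are correct, and a cell of $n-3$ twins correctly gives multiplicity $n-4$. Doubling $M$ at $\alpha=\tfrac{1}{2}$ gives
\[
Q=\begin{pmatrix}
2n+1 & 2n-6 & 2 & 1 & 2 & 2\\
2 & 4n+1 & 4 & 3 & 2 & 6\\
2 & 4n-12 & 4n+1 & 1 & 6 & 4\\
1 & 3n-9 & 1 & 3n-1 & 4 & 2\\
1 & n-3 & 3 & 2 & n+9 & 4\\
1 & 3n-9 & 2 & 1 & 4 & 3n+3
\end{pmatrix},
\]
whose row sums are $2Tr$ of the corresponding vertices. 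The matrix printed in the paper's proof of this corollary has $2$ in position $(6,5)$ instead of $4=d(v_{n+2},v_{n+1})+d(v_{n+2},v_{n+3})$. Its last row therefore sums to $6n\neq 2Tr(v_{n+2})=6n+2$, and the stated $p(x)$ inherits this error.

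Concretely, the $x^4$-coefficient of $\det(xI-Q)$ is
\[
\tfrac{1}{2}\left((\operatorname{tr}Q)^2-\operatorname{tr}Q^2\right)=117n^2+164n+136,
\]
not $117n^2+164n+144$; the value $144$ is exactly what the faulty entry produces. At $n=4$ one has $\det Q=3548484$, whereas the stated constant term evaluates to $3695136$, which is the determinant of the printed matrix. So the corollary is false as written. What your argument actually proves is the corrected statement: the six remaining eigenvalues are the roots of $\det(xI-Q)$.

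Your proposed sanity checks would not reveal this. The error is off-diagonal, so the $x^5$-coefficient $-(17n+14)$ is unaffected. The trace identity is also misquoted: $\operatorname{tr}D^Q=\sum_v Tr(v)=2n^2+20n-8$, and it is the sum of all entries that equals $2\sum_v Tr(v)$. Both the stated and the correct polynomial satisfy the trace identity. Only the check you mention in passing, comparing the $x^4$-coefficient with the sum of principal $2\times 2$ minors, exposes the error.
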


\noindent{\textbf{Proof.}
Setting $\alpha = \tfrac{1}{2}$ gives $2D_{\frac{1}{2}}(G) = D^{Q}(G)$.  
Applying Theorem \ref{t3} to this special case yields the eigenvalues 
$2n+7$ (with multiplicity $n-4$), $3n-1$, and $n+7$. The other six eigenvalues can be determined from the characteristic polynomial $p(x)$ of the following quotient matrix 
\[
M=\begin{pmatrix}
2n+1 & 2n-6 & 2 & 1 & 2 & 2\\
2 & 4n+1 & 4 & 3 & 2 & 6\\
2 & 4n-12 & 4n+1 & 1 & 6 & 4\\
1 & 3n-9 & 1 & 3n-1 & 4 & 2\\
1 & n-3 & 3 & 2 & n+9 & 4\\
1 & 3n-9 & 2 & 1 & 2 & 3n+3
\end{pmatrix}.
\]

\hfill$\blacksquare$

\section{Conclusion}
In this paper, we characterised finite groups of exponent $q$ whose power graphs are friendship graphs, firefly-type graphs, or torch graphs. In particular, we proved that the power graph of a finite group of exponent $q$ is a friendship graph if and only if $q=3$, and that a finite abelian group has a friendship power graph precisely when it is isomorphic to $\mathbb{Z}_3^{n}$. We also showed that, among the symmetric and alternating groups, only $S_3$ and $A_4$ have firefly-type power graphs, and that no finite group has a power graph isomorphic to a torch graph. Furthermore, we determined the $D_{\alpha}$-spectra of friendship, firefly-type, and torch graphs, from which the corresponding distance and distance signless Laplacian spectra follow as special cases.
The results presented here contribute to the understanding of the interplay between finite groups, power graphs, and spectral graph theory. Future work may focus on characterising algebraic structures whose associated graphs belong to other graph classes, and on investigating generalised distance spectral properties.

\bigskip
\noindent{\bf Acknowledgements :} The first author is grateful to DST INSPIRE (Award No. 03/2022/002991), New Delhi, India, for financial support.
\\ The Second author is grateful to UGC- NET, Govt of India, New Delhi, for financial support.

\bigskip
\noindent{\bf Conflict of Interest:} The authors declare that there is no conflict of interest regarding the publication of this work.

\bigskip
\noindent{\bf Data Availability Statement:} No datasets were generated, analysed, or utilised during the preparation of this study.

\end{document}